\theoremstyle{definition}
\newtheorem{definition}{Definition}[section]
\newtheorem{theorem}[definition]{Theorem}
\newtheorem{lemma}[definition]{Lemma}
\newtheorem{proposition}[definition]{Proposition}
\newtheorem{corollary}[definition]{Corollary}
\newtheorem*{tha}{Theorem A}
\newtheorem*{thb}{Theorem B}
\newtheorem*{thc}{Theorem C}
\newtheorem*{thd}{Theorem D}
\newtheorem*{theo}{Theorem E}
\newtheorem*{thm}{Main Theorem}
\newtheorem*{cro}{Corollary}
\newcommand{\sq}[1]{\ifx#1([\else\ifx#1)]%
  \else\message{invalid use of "sq"}\fi\fi}
\newcommand{\PP}{\mathbb{P}}
\newcommand{\RR}{\mathbb{R}}
\newcommand{\CC}{\mathbb{C}}
\newcommand{\QQ}{\mathbb{Q}}
\DeclareMathOperator{\Supp}{Supp}
\DeclareMathOperator{\Nev}{Nev}
\DeclareMathOperator{\Nevbir}{Nev_{\operatorname{bir}}}
\DeclareMathSymbol{\idot}{\mathbin}{operators}{`\.}
\newcommand{\SO}{\mathcal{O}}
\newcommand{\SI}{\mathcal{I}}
\newcommand{\Proj}{\mathrm{Proj}}
\newcommand{\supp}{\mathrm{Supp}}
\begin{document}
\title[Generalized Subspace Theorem For Closed Subschemes in Subgeneral Position]{A Generalized Subspace Theorem For Closed Subschemes in Subgeneral Position}
\author{Yan He}
\address{Department of Mathematics\newline
\indent University of Houston\newline
\indent Houston,  TX 77204, U.S.A.} 
\email{yanhe@math.uh.edu}
\author{Min Ru}
\address{Department of Mathematics\newline
\indent University of Houston\newline
\indent Houston,  TX 77204, U.S.A.} 
\email{minru@math.uh.edu}

\begin{abstract}
In this paper, we extend the  recent theorem of G. Heier and A. Levin \cite{HL17} on 
the  generalization of Schmidt's subspace theorem and Cartan's Second Main Theorem in Nevanlinna theory to 
closed subschemes located  in $l$-subgeneral position, using the generic linear combination technique due to Quang
 (see  \cite{Quang19}).
\end{abstract}
 \thanks{2010\ {\it Mathematics Subject Classification.}
11J87, 11J97, 11J13,  14C20,    32H30.}  
\keywords{Subspace theorem, Second Main Theorem, subgeneral position}
\thanks{The second named author is supported in part by Simon Foundation grant award \#531604.}

\baselineskip=16truept \maketitle \pagestyle{myheadings}
\markboth{}{A generalized subspace theorem  for closed subschemes in subgeneral position}
\section{Introduction}
In  recent years, there have been many efforts and developments in extending the 
Schmidt's subspace theorem, as well as  Cartan's Second Main Theorem in Nevanlinna theory.
The break through was made 
 by Evertse-Ferretti \cite{ef_festschrift} by proving the following Theorem A (see also the result of Corvaja and Zannier \cite{cz_ajm}).
 We use the standard notations in Nevanlinna theory and Diophantine approximation
(see, for example, \cite{vojta_cm}, \cite{Vojta_LNM}, \cite{book:minru}, \cite{R4} and \cite{ru}).  For the closed subschemes, we use the notations in  \cite{HL17} and \cite{rw}.
 
\begin{tha}[\cite{ef_festschrift}]\label{thm:EF}
Let $X$ be a projective variety of dimension $n$ defined over a number field $k$. Let $S$ be a finite set of places of $k$.
For each $v\in S$, let $D_{0, v}, \dots, D_{n, v}$ be  effective Cartier divisors on $X$, defined over $k$, and in general position. Suppose that there exists an ample Cartier
divisor $A$ on $X$ and positive integers $d_{j, v}$ 
such that $D_{j, v}$ is linearly equivalent to $d_{j, v}A$  (which we denote by $D_{j, v}\sim d_{j,v} A$) for  $j=0,\dots, n$ and all $v\in S$. 
Then, for every $\epsilon>0$, there exists a proper Zariski-closed subset $Z\subset X$ such that for all points $x\in X(k)\setminus Z$,
$$\sum_{v\in S}\sum_{j=0}^{n}\frac{1}{d_{j, v}}\lambda_{D_{j, v}, v}(x)
  \leq (n+1+\epsilon)h_A(x).$$ 
\end{tha}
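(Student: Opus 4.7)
The plan is to reduce Theorem A to Schmidt's subspace theorem applied to an embedding of $X$ into projective space via a suitable multiple of $A$, using a filtration of global sections adapted to the divisors $D_{j,v}$ together with a volume/order-of-vanishing estimate.

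\textbf{Step 1: Normalization.} First I would reduce to the case where the $d_{j,v}$ share a common value $d$. Choose $L$ to be a common multiple of the integers $\{d_{j,v}\}$; replacing each $D_{j,v}$ by $(L/d_{j,v})D_{j,v}$, using that $\lambda_{mD,v} = m\lambda_{D,v}$, and dividing the resulting inequality by $L$, we may assume $D_{j,v}\sim dA$ for a fixed $d$ and every $j,v$. The sum to be bounded then takes the form $\tfrac{1}{d}\sum_{v\in S}\sum_j \lambda_{D_{j,v},v}(x)$ and the goal becomes $(n+1+\epsilon)h_A(x)$.

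\textbf{Step 2: Embedding and filtration.} Pick a large integer $N$ so that $\mathcal{O}(NA)$ is very ample, set $V=H^0(X,\mathcal{O}(NA))$, and use a basis of $V$ to embed $X\hookrightarrow\mathbb{P}^{M-1}$ where $M=\dim V$. For each $v\in S$ and each lattice tuple $\mathbf{a}=(a_0,\dots,a_n)\in\mathbb{Z}_{\geq 0}^{n+1}$, consider the subspace
\[
V_{v,\mathbf{a}}=H^0\bigl(X,\mathcal{O}(NA)\otimes\mathcal{O}(-a_0D_{0,v}-\cdots-a_nD_{n,v})\bigr)\subset V,
\]
obtained by multiplying by the section cutting out $\sum a_j D_{j,v}$. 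Order these subspaces by the partial order on $\mathbf{a}$, and extract a basis $\phi_1^{(v)},\dots,\phi_M^{(v)}$ of $V$ adapted to this filtration: each basis vector comes from some layer, and its order of vanishing along $D_{j,v}$ is bounded below by the corresponding $a_j$.

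\textbf{Step 3: The key estimate (main obstacle).} This is where the core of Evertse--Ferretti lies. Using that $D_{0,v},\dots,D_{n,v}$ are in general position on the $n$-dimensional variety $X$ and each is linearly equivalent to $dA$, a Hilbert polynomial / volume computation shows
\[
\sum_{i=1}^{M}\operatorname{ord}_{D_{j,v}}\phi_i^{(v)} \geq \frac{Nd}{n+1}\, M - O(N^{n})\cdot M/N,
\]
i.e., the average order of vanishing of an adapted basis vector along any one of the $D_{j,v}$ is at least $\tfrac{Nd}{n+1}(1-\delta)$ for $N$ large, where $\delta\to 0$ as $N\to\infty$. Equivalently, writing $c_i^{(v,j)}=\operatorname{ord}_{D_{j,v}}\phi_i^{(v)}$, one has $\sum_{i,j}c_i^{(v,j)}\geq Nd(1-\delta)M$. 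The proof of this bound uses the general-position hypothesis to estimate $h^0(X,\mathcal{O}(NA-\sum a_j D_{j,v}))$ as a Riemann sum converging to an integral over the simplex $\{\sum a_j\leq N\}$, giving leading term $\tfrac{(Nd)^n}{(n+1)!}\deg_A(X)$ with the right combinatorics.

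\textbf{Step 4: Schmidt's subspace theorem and conclusion.} For each $x$, standard Weil-function machinery gives
\[
\lambda_{D_{j,v},v}(x)= \max_{\phi}\bigl(\operatorname{ord}_{D_{j,v}}\phi\bigr)\cdot\text{(log valuation)}+O(1),
\]
so summing over the adapted basis and using Step 3 bounds $\sum_{j}\lambda_{D_{j,v},v}(x)$ in terms of $\sum_i\lambda_{H_i^{(v)},v}(x)$, where $H_i^{(v)}$ are the hyperplanes in $\mathbb{P}^{M-1}$ corresponding to $\phi_i^{(v)}$, which are in general position. Applying Schmidt's subspace theorem to these hyperplanes over $S$, and using that the Weil height for the embedding equals $Nh_A(x)+O(1)$, yields the desired inequality after dividing by $Nd$ and letting $N$ large enough that the combinatorial error $\delta$ is absorbed into $\epsilon$.

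The technical heart is Step 3, the filtration/volume estimate; the other steps are standard reductions once that bound is in hand.
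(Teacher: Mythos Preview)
The paper does not contain a proof of Theorem~A. It is quoted from Evertse--Ferretti \cite{ef_festschrift} as a background result and then \emph{used} as a black box (see the application at inequality~(\ref{B}) in the proof of Theorem~\ref{thm:divisorcase}). So there is no ``paper's own proof'' to compare your proposal against; any proof you supply is necessarily going beyond what the paper does.

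That said, your outline is broadly the Evertse--Ferretti/Corvaja--Zannier/Ru strategy: embed via $|NA|$, build a joint filtration of $H^0(X,NA)$ by vanishing along the $D_{j,v}$, extract an adapted basis, estimate the total vanishing order by a Hilbert-function/simplex-volume computation, and feed the resulting hyperplanes into Schmidt's subspace theorem. Two places deserve tightening. First, the displayed relation in Step~4,
\[
\lambda_{D_{j,v},v}(x)=\max_{\phi}\bigl(\operatorname{ord}_{D_{j,v}}\phi\bigr)\cdot(\text{log valuation})+O(1),
\]
is not the correct mechanism. What one actually uses is that each adapted basis element $\phi_i^{(v)}$, viewed as a section of $NA$, has $\operatorname{div}\phi_i^{(v)}\ge\sum_j c_i^{(v,j)}D_{j,v}$, hence $\lambda_{H_i^{(v)},v}(x)\ge\sum_j c_i^{(v,j)}\lambda_{D_{j,v},v}(x)+O_v(1)$; summing over $i$ and invoking the Step~3 lower bound on $\sum_i c_i^{(v,j)}$ is what converts Schmidt's bound on $\sum_{v,i}\lambda_{H_i^{(v)},v}$ into the desired bound on $\sum_{v,j}\lambda_{D_{j,v},v}$. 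Second, in Step~3 the general-position hypothesis enters not merely as a Riemann-sum heuristic but through a genuine cohomological estimate (a Koszul-type exactness or the Evertse--Ferretti combinatorial lemma on $h^0(NA-\sum a_jD_{j,v})$); you should make explicit which statement you are invoking there, since this is where the argument can fail without general position.
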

Here, $\lambda_{D_{j, v},v}$ is a local Weil function associated to the divisor $D_{j, v}$ and place $v$ in $S$. 

The counterpart result in Nevanlinna theory was obtained by the  second named author \cite{ru_annals} in 2009. 
\begin{thb}[\cite{ru_annals}]
Let $X$ be a complex projective variety and 
$D_1, \dots, D_q$ be  effective Cartier divisors on $X$, located in general position.
Suppose that there exists an ample Cartier divisor $A$ on $X$
and positive integers $d_j$ such that    $D_j\sim d_j A$  for
 $j=1, \dots, q$.
Let $f: \mathbb C \rightarrow X$ be a
 holomorphic map with Zariski dense image. 
 Then, for every $\epsilon > 0$, 
$$ \sum_{j=1}^q {\frac{1}{d_j}} m_f(r, D_j) \le_{exc} (\dim X+1+\epsilon)T_{f,A}(r)$$
where $\leq_{exc}$ means the inequality holds for all $r\in \RR^{\geq 0}$ outside a set of finite Lebesgue measure.
\end{thb}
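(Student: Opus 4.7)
The plan is to reduce Theorem B to Cartan's Second Main Theorem for linearly non-degenerate holomorphic curves in projective space, via the filtration technique of Corvaja-Zannier and Evertse-Ferretti translated into Nevanlinna theory through Vojta's dictionary.

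Fix a large integer $N$ and set $V_N := H^0(X, NA)$, with $M_N := \dim V_N = (A^n/n!)\,N^n + O(N^{n-1})$ by asymptotic Riemann-Roch. Let $n := \dim X$. For each $n$-subset $J = \{j_1,\dots,j_n\} \subset \{1,\dots,q\}$ and each tuple $\mathbf{b} = (b_1,\dots,b_n) \in \ZZ_{\geq 0}^n$, define
$$
V_N(J,\mathbf{b}) \;:=\; H^0(X,\, NA - b_1 D_{j_1} - \cdots - b_n D_{j_n}),
$$
yielding a decreasing filtration of $V_N$ indexed by $\mathbf{b}$. Choose an ordered basis $\phi_1,\dots,\phi_{M_N}$ of $V_N$ adapted to this filtration. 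The combinatorial Mumford-style estimate of Evertse-Ferretti then gives, uniformly in $J$, a sharp lower bound on the weighted total order of vanishing $\sum_i\sum_k \ord_{D_{j_k}}(\phi_i)/d_{j_k}$ whose leading-order asymptotic produces precisely the coefficient $n+1$ in the final inequality.

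The basis $\phi_1,\dots,\phi_{M_N}$ defines a rational map $\Phi : X \dashrightarrow \PP^{M_N-1}$. Since $f$ has Zariski dense image, $\Phi \circ f$ is linearly non-degenerate, so Cartan's Second Main Theorem applied to $\Phi \circ f$ with the $M_N$ coordinate hyperplanes $H_1,\dots,H_{M_N}$ yields
$$
\sum_{i=1}^{M_N} m_{\Phi\circ f}(r,H_i) \;\leq_{exc}\; (M_N + \epsilon')\,T_{\Phi\circ f}(r) = (M_N + \epsilon')\,N\,T_{f,A}(r) + O(1).
$$
By general position, at most $n$ of the $D_j$ pass through any given point of $X$. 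For each $z$ outside the pole set of the $\lambda_{D_j}$, choose an $n$-subset $J(z)$ containing the indices $j$ with $\lambda_{D_j}(f(z))/d_j$ largest, and take the maximum over the finite family of $n$-subsets before integrating. Combining the pointwise filtration estimate with the local decomposition of each $\phi_i$ into a product of powers of the defining sections of $D_{j_k}$ (for $j_k \in J$) times a local unit, one converts the Cartan bound into an upper bound on $\sum_j \frac{1}{d_j}\,m_f(r,D_j)$; dividing by the normalizing factor from the filtration estimate and letting $N \to \infty$ with $\epsilon' \ll \epsilon$ produces the asserted $(n+1+\epsilon)\,T_{f,A}(r)$.

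The principal obstacle is the Mumford-type filtration estimate itself: constructing an ordered basis of $V_N$ whose weighted total order of vanishing along any $n$-subset of the divisors attains the sharp leading-order value. This combinatorial step is the technical core of Evertse-Ferretti's argument (an adaptation of Mumford's degree bounds for filtrations of sections of ample bundles), relying on a careful dimension count of the graded pieces $V_N(J,\mathbf{b})/V_N(J,\mathbf{b}')$; it transfers verbatim to the Nevanlinna setting once one works pointwise in $z \in \CC$ in place of $v \in S$. A secondary technical point, the dependence of $J(z)$ on $z$, is handled by taking the maximum over the finite family of $n$-subsets (there are $\binom{q}{n}$ of them) before integrating.
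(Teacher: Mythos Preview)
The paper does not prove Theorem~B. It is quoted in the introduction as a known result from \cite{ru_annals} and is used throughout only as a black-box input (the analytic counterpart of Theorem~A) in the proofs of Theorem~2.8 and the analytic Main Theorem. There is therefore no ``paper's own proof'' to compare your proposal against.

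For what it is worth, your outline is a reasonable sketch of the strategy actually carried out in \cite{ru_annals}: one embeds $X$ in projective space via a high multiple of $A$, builds filtrations of $H^0(X,NA)$ adapted to each relevant subset of the $D_j$, extracts bases adapted to those filtrations, and applies Cartan's Second Main Theorem to the resulting collection of sections, with the Evertse--Ferretti combinatorial lemma supplying the $(n+1)$ constant. One technical point in your write-up is loose: you apply Cartan's theorem to the $M_N$ coordinate hyperplanes of a \emph{single} basis and then vary $J(z)$ afterward, but in the actual argument one must assemble the adapted bases for \emph{all} relevant subsets $J$ into one large finite family of sections and apply Cartan once to that family; otherwise the dependence of the basis on $J(z)$ is not accounted for inside the Second Main Theorem. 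This is a fixable issue of presentation rather than a genuine gap, but as written the reduction step does not quite close.
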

In order to formulate and uniformize the results for a general divisor $D$ on $X$,  the second named author 
introduced (see \cite{R4} and \cite{ru}) the notion of \emph{Nevanlinna constant} $\Nev(D)$. 
  Later, the Nevanlinna constant $\Nev(D)$ was further developed by Ru-Vojta \cite{rv} to the
\emph{birational Nevanlinna constant} $\Nevbir(L, D)$, where $L$ is a line sheaf (i.e. an invertible sheaf) over $X$.
With the notation  $\Nevbir(L, D)$, the following result  was obtained  by Ru-Vojta in \cite{rv}.

\begin{thc}[Arithmetic Part, \cite{rv}] \label{b_thmc}
Let $k$ be a number field, and $S$ be a finite set of places of $k$.
Let $X$ be a projective variety and
 let $D$ be an effective Cartier
divisor on $X$, both defined over $k$.
Let $L$ be a line sheaf on $X$ with $\dim H^0(X, L^N)\ge 1$
for some $N>0$. 
Then, for every $\epsilon>0$,
there is a proper Zariski-closed subset $Z$ of $X$ such that the inequality
$$
m_S(x,D)  \le \left(\Nevbir(L, D)+\epsilon\right) h_{L}(x)
$$
holds for all $x\in X(k)$ outside of $Z$.  Here 
$m_S(x,D):=\sum_{v\in S}\lambda_{D,v}(x).$\end{thc}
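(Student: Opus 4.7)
The plan is to adapt the filtration method of Corvaja--Zannier and Evertse--Ferretti to the birational setting packaged by $\Nevbir(L,D)$. First, I would unwind the definition of the birational Nevanlinna constant: given $\epsilon>0$, choose $\delta>0$ small, and find a proper birational morphism $\pi\colon \tilde X\to X$ of projective $k$-varieties, a positive integer $N$, and a linear subspace $V\subseteq H^0(\tilde X,\pi^*L^N)$ such that the combinatorial data associated with the filtration of $V$ by vanishing order along $\pi^*D$ yields a ratio $NM/T\leq \Nevbir(L,D)+\delta$, where $M=\dim V$ and $T$ is the sum of filtration indices realized by an adapted basis. This birational reduction is harmless since $h_L(x)$ and $h_{\pi^*L}(\tilde x)$ agree up to $O(1)$ off the exceptional locus $\exc(\pi)$, and similarly $\lambda_{D,v}(x)=\lambda_{\pi^*D,v}(\tilde x)+O(1)$ outside $\exc(\pi)$.

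Next, for each $v\in S$ I would construct a basis $s_{1,v},\dots,s_{M,v}$ of $V$ adapted to the filtration
\[
W_j:=V\cap H^0\!\bigl(\tilde X,\pi^*L^N\otimes \OO(-j\pi^*D)\bigr),\qquad j=0,1,2,\dots,
\]
so that each $s_{i,v}$ lies in some $W_{j_{i,v}}\setminus W_{j_{i,v}+1}$ with $\sum_i j_{i,v}=T$ for every $v$. Since $s\in W_j$ implies $\lambda_{(s),v}(\tilde x)\geq j\,\lambda_{\pi^*D,v}(\tilde x)+O(1)$, summing over $i$ and $v$ gives
\[
\sum_{v\in S}\sum_{i=1}^{M}\lambda_{(s_{i,v}),v}(\tilde x)\;\geq\; T\cdot m_S(x,D)+O(1).
\]

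I would then embed $\tilde X$ into $\PP(V^{\vee})\cong\PP^{M-1}$ by the rational map defined by $V$ and apply the Evertse--Ferretti version of Schmidt's subspace theorem to the hyperplane sections cut out by the finite family $\{s_{i,v}\}$. Since any $M$ linearly independent members are in general position on the image, one obtains a proper Zariski-closed $\tilde Z\subsetneq \tilde X$ such that
\[
\sum_{v\in S}\sum_{i=1}^{M}\lambda_{(s_{i,v}),v}(\tilde x)\;\leq\;(M+\delta')N\, h_{\pi^*L}(\tilde x)
\]
for $\tilde x\in\tilde X(k)\setminus\tilde Z$. Choosing $\delta,\delta'>0$ so that $(M+\delta')N/T\leq \Nevbir(L,D)+\epsilon$, combining with the lower bound, and setting $Z:=\pi(\tilde Z)\cup\pi(\exc(\pi))$ yields the claimed inequality on $X(k)\setminus Z$.

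The main obstacle I expect is the Hermite-interpolation step producing adapted bases that simultaneously realize the infimum in $\Nevbir(L,D)$ and satisfy the balancing condition $\sum_i j_{i,v}=T$ uniformly in $v$, while keeping the resulting sections in sufficiently general position on $\tilde X$ for the Evertse--Ferretti subspace theorem to apply; this is where the passage from $\Nev$ to $\Nevbir$ really pays off, since blowing up buys the combinatorial flexibility needed to balance the filtration data. A secondary technicality is the birational bookkeeping: points of $X(k)$ at which $\pi^{-1}$ is indeterminate must be absorbed into $Z$, and the bounded-discrepancy estimates for $h_L$ and $\lambda_{D,v}$ off $\exc(\pi)$ are needed to ensure the inequality descends cleanly from $\tilde X$ to $X$.
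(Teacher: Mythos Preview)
The paper does not contain a proof of Theorem~C; it is quoted from Ru--Vojta \cite{rv} purely as background and motivation, and the present paper neither reproves it nor uses it in the proof of its Main Theorem (which instead rests on Theorem~A and Quang's Lemma~\ref{quang}). So there is nothing here to compare your attempt against.

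For what it is worth, your sketch is broadly in line with the Ru--Vojta argument: unpack the birational Nevanlinna constant to obtain $(\pi,N,V)$ realizing a ratio close to the infimum, build filtration-adapted bases of $V$ to get the lower bound $T\cdot m_S(x,D)$, and bound the same sum from above via the subspace theorem applied to the associated hyperplane sections. One point to watch: after embedding by $V$ you are dealing with hyperplanes, so the relevant input is the classical Schmidt subspace theorem for linear forms (or Vojta's general-position reformulation), not the higher-degree Evertse--Ferretti theorem; invoking the latter is harmless but obscures what is actually needed. A second point is that the map defined by $V$ need only be birational onto its image (or even generically finite, depending on which variant of $\Nevbir$ you use), not an embedding, so ``embed $\tilde X$ into $\PP(V^{\vee})$'' should be replaced by the appropriate rational map, with its indeterminacy locus absorbed into $Z$ just as you already do for $\exc(\pi)$.
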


The  corresponding result in  Nevanlinna theory is also obtained in \cite{rv}. Here and in the following context,  we only state the arithmetic results.

In general  $\Nevbir(L, D)$ is hard to compute.  However, in the case when $D=D_1+\cdots + D_q$ where  $D_1, \dots, D_q$ are  effective Cartier divisors intersecting properly
on $X$, Ru-Vojta \cite{rv} computed the  $\Nevbir(L, D)$ in terms of the  following so-called $\beta$-constant.

\begin{definition}\label{def_aut_lambda}
Let $L$ be a big line sheaf and let $D$ be a nonzero effective
Cartier divisor on a complete variety $X$.  We define
$$
  \beta( L, D)
    = \liminf_{N\to\infty}
      \frac {\sum_{m\ge 1}h^0(L^N(-mD))} {Nh^0(L^N)}.
$$
\end{definition}

\begin{thd}[Arithmetic Part, \cite{rv}] \label{Ga}
Let $X$ be a projective variety, and $D_1, \dots, D_q$ be effective Cartier divisors, both defined over a number field $k$. Assume that 
$D_1, \dots, D_q$ intersect properly on $X$. Let $S\subset M_k$ be a finite set of places on $k$.   
 Let $L$ be a big line sheaf on $X$.
Then, for every $\epsilon>0$, there is a proper Zariski-closed subset $Z$ of $X$ such that the inequality 
\begin{equation} \label{Ga_ineq}
\sum_{j=1}^q \beta(L,D_j) m_S(x,D_j)
    \leq (1+\epsilon) h_{L}(x)
\end{equation}
holds for all $x\in X(k)$ outside of $Z$.
\end{thd}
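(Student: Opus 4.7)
The strategy is to reduce Theorem~D to Theorem~C by bounding $\Nevbir(L, D)$ for a suitable positive-integer combination $D = \sum_{j=1}^q c_j D_j$ of the given divisors. After approximating each $\beta(L, D_j)$ by a rational number and clearing denominators, it suffices to establish that, for any positive integers $c_1, \dots, c_q$,
$$\Nevbir\Bigl(L,\, \sum_{j=1}^q c_j D_j\Bigr) \le \max_{1 \le j \le q} \frac{c_j}{\beta(L, D_j)}.$$
Once this bound is in place, choosing the $c_j$ proportional to $\beta(L, D_j)$ and invoking Theorem~C together with the decomposition $\lambda_{D, v} = \sum_j c_j \lambda_{D_j, v} + O(1)$ yields the asserted inequality, after replacing $\epsilon$ by a suitably smaller value.

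To prove the bound on $\Nevbir$, I would fix a large integer $N$ and examine the filtrations of $V := H^0(X, L^N)$ by the subspaces $V_{j, m} := H^0(X, L^N(-mD_j))$. By the very definition of $\beta(L, D_j)$, each filtration satisfies $\sum_{m \ge 1} \dim V_{j, m} \ge (N\beta(L, D_j) - \eta)\dim V$ for $N$ sufficiently large, given any prescribed $\eta > 0$. The crucial step is to merge these $q$ filtrations into a single basis $\{s_1, \dots, s_M\}$ of $V$ (where $M = \dim V$), together with nonnegative integers $a_{ij}$, such that each $s_i$ lies in $H^0(X, L^N(-\sum_j a_{ij} D_j))$ and $\sum_{i=1}^M a_{ij} \ge (N\beta(L, D_j) - \eta)M$ for every $j$. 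Such a basis produces the lower bound on orders of vanishing along $D = \sum_j c_j D_j$ which, via the construction of $\Nevbir$ in \cite{rv}, yields the required upper bound.

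The main obstacle is the simultaneous refinement of all $q$ filtrations. For a single divisor, a filtered basis realizing the sum $\sum_m \dim V_{j, m}$ is elementary. For $q \ge 2$, the proper intersection hypothesis on $D_1, \dots, D_q$ becomes indispensable: it provides the dimension estimates on the multi-intersections $H^0(X, L^N(-\sum_j m_j D_j))$ via the short exact sequences
$$0 \to L^N\Bigl(-\textstyle\sum_j m_j D_j - D_{j_0}\Bigr) \to L^N\Bigl(-\textstyle\sum_j m_j D_j\Bigr) \to L^N\Bigl(-\textstyle\sum_j m_j D_j\Bigr)\big|_{D_{j_0}} \to 0,$$
ensuring these dimensions behave additively modulo lower-order terms. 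An induction on $q$, using these estimates, then delivers the common filtered basis and completes the reduction to Theorem~C.
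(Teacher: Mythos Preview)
The paper does not prove Theorem~D at all: it is stated purely as a cited result from Ru--Vojta~\cite{rv}, used only as background to situate the paper's own contribution (the Main Theorem on closed subschemes in $l$-subgeneral position). There is therefore no ``paper's own proof'' of Theorem~D to compare your proposal against.

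That said, your sketch does broadly follow the strategy of~\cite{rv}: reduce to Theorem~C by bounding $\Nevbir(L,\sum_j c_j D_j)$, and obtain that bound by constructing a single basis of $H^0(X,L^N)$ adapted simultaneously to all $q$ filtrations coming from the $D_j$. One caution: the step you describe as ``an induction on $q$, using these estimates'' is the genuinely hard part of~\cite{rv}. The proper-intersection hypothesis is used there not via the short exact sequences you wrote down, but rather through a local argument: at any point of $X$ only $\le \dim X$ of the $D_j$ pass through, so after reindexing one works with at most $\dim X$ filtrations at a time and invokes a combinatorial lemma (of Levin/Autissier type) to build the joint basis. Your short-exact-sequence approach would require controlling $h^1$ terms uniformly, which is not how the actual argument proceeds. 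So while your overall architecture is correct, the mechanism you propose for the filtration-merging step is not the one that makes the proof go through.
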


On the other hand, in \cite{HL17}, Heier-Levin   (see also McKinnon-Roth \cite{McRo}) obtained a similar statement where $\beta(L,D_j)$ is replaced by the Seshadri constant $\epsilon_{D_j}(L)$ in the case when $L$ is ample (note: Although \cite{HL17} was listed in 2017, the statement they obtained, as noted in \cite{HL17}, indeed goes back to an earlier unpublished manuscript which they presented at 2014 meeting in Banff on Vojta's Conjectures). In addition,  they obtained the results for closed subschemes as well.

\begin{definition}
Let $X$ be a projective variety,
$Y$ be a closed subscheme of $X$. Let $A$ be a nef Cartier divisor on $X$. The {\it Seshadri constant $\epsilon_{Y}(A)$ of $Y$ with respect to $A$ }is a real number given by 
$$\epsilon_{Y}(A) = \sup\{\gamma\in \QQ: \pi^*A-\gamma E \text{ is nef}\}$$
where $\pi: \tilde X\rightarrow X$ is the blowing-up of $X$ along $Y$ with the exceptional divisor $E$. Note that when $Y$ is a Cartier divisor on $X$, then
$\epsilon_{Y}(A)$ can be defined similarly without doing the blowing-up.
\end{definition}
\begin{theo}  [Arithmetic Part, \cite{HL17}]
Let $X$ be a projective variety of dimension $n$ defined over a number field $k$. Let $S$ be a finite set of places of $k$. For each $v\in S$, let $Y_{0,v},\dots,Y_{n,v}$ be closed subschemes of $X$, defined over $k$, and in general position. Let $A$ be an ample Cartier divisor on $X$. Then, for  $\epsilon > 0$, 
 there exists a proper Zariski-closed subset $Z\subset X$ such that for all points $x\in X(k)\setminus Z$,
$$\sum_{v\in S}\sum_{j=0}^n \epsilon_{Y_{j,v}}(A) \lambda _{Y_{j,v},v}(x) < (n+1+\epsilon)h_A(x).$$
\end{theo}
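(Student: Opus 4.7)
My plan is to reduce Theorem~E to the Cartier-divisor case of Theorem~A of Evertse--Ferretti. For each $v\in S$ and each $j\in\{0,\dots,n\}$ I would manufacture an effective Cartier divisor $D_{j,v}$ on $X$, linearly equivalent to a common large multiple $NA$, whose multiplicity along $Y_{j,v}$ is at least $\lfloor N(\epsilon_{Y_{j,v}}(A)-\delta)\rfloor$ for a small parameter $\delta>0$. Letting $\pi_{j,v}\colon\tilde X_{j,v}\to X$ be the blow-up along $Y_{j,v}$ with exceptional divisor $E_{j,v}$, the Seshadri constant tells us that $\pi_{j,v}^*A-\gamma E_{j,v}$ is nef for rational $\gamma<\epsilon_{Y_{j,v}}(A)$; it is also big, being a positive combination of the nef class $\pi_{j,v}^*A-\epsilon_{Y_{j,v}}(A)E_{j,v}$ and the big class $\pi_{j,v}^*A$. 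Hence $H^0(\tilde X_{j,v},N\pi_{j,v}^*A-\lfloor N\gamma\rfloor E_{j,v})\neq 0$ for $N$ sufficiently divisible, and via the natural isomorphism $H^0(\tilde X_{j,v},N\pi_{j,v}^*A)\cong H^0(X,NA)$ a nonzero section produces the desired $D_{j,v}\in|NA|$ with $\mult_{Y_{j,v}}D_{j,v}\ge\lfloor N\gamma\rfloor$.

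By the standard functoriality of local Weil functions under scheme-theoretic containment (a local defining equation of $D_{j,v}$ lies in the $\lfloor N\gamma\rfloor$-th power of the ideal sheaf of $Y_{j,v}$), one obtains
\[
\lambda_{D_{j,v},v}(x)\ \ge\ \lfloor N\gamma\rfloor\,\lambda_{Y_{j,v},v}(x)\ +\ O(1).
\]
The principal obstacle is then to arrange, for each fixed $v$, that the $n+1$ divisors $D_{0,v},\dots,D_{n,v}$ sit in general position as Cartier divisors so that Theorem~A applies. Each support decomposes as $\Supp(D_{j,v})=Y_{j,v}\cup R_{j,v}$, where $R_{j,v}$ denotes the residual components; since $Y_{0,v},\dots,Y_{n,v}$ are in general position on $X$ of dimension $n$, the intersection $\bigcap_{j=0}^n Y_{j,v}$ is empty and $\bigcap_{j\in I}Y_{j,v}$ has codimension $\ge|I|$ for every $\emptyset\ne I\subsetneq\{0,\dots,n\}$. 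A Bertini-type argument applied to a sufficiently generic section in $H^0(\tilde X_{j,v},N\pi_{j,v}^*A-\lfloor N\gamma\rfloor E_{j,v})$ then forces the $R_{j,v}$ to meet each such intersection only in the expected (empty) codimension, giving $\bigcap_{j=0}^n\Supp(D_{j,v})=\emptyset$. Carrying out this Bertini step uniformly across $v\in S$ and over all subsets $I$ is the delicate part of the argument.

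Once general position is secured, Theorem~A applied to $\{D_{j,v}\}_{v\in S,\,0\le j\le n}$ (all of class $NA$, so $d_{j,v}=N$) produces a proper Zariski-closed $Z_0\subset X$ outside of which
\[
\sum_{v\in S}\sum_{j=0}^n\frac{1}{N}\lambda_{D_{j,v},v}(x)\ \le\ \bigl(n+1+\tfrac{\epsilon}{2}\bigr)h_A(x).
\]
Combining this with the Weil-function lower bound, and setting $\gamma=\epsilon_{Y_{j,v}}(A)-\delta$, yields
\[
\sum_{v\in S}\sum_{j=0}^n\bigl(\epsilon_{Y_{j,v}}(A)-\delta-\tfrac{1}{N}\bigr)\lambda_{Y_{j,v},v}(x)\ \le\ \bigl(n+1+\tfrac{\epsilon}{2}\bigr)h_A(x)+O(1).
\]
Invoking the first-main-theorem bound $\lambda_{Y_{j,v},v}(x)\le C_{j,v}\,h_A(x)+O(1)$ (valid for a suitable constant depending on $Y_{j,v}$ and $A$) to absorb the $\delta$- and $O(1)$-terms into $\tfrac{\epsilon}{2}h_A(x)$, after enlarging $Z_0$ to include the finitely many $k$-rational points of bounded $h_A$-height, completes the argument.
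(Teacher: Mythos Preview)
Your overall strategy coincides with the paper's (and with Heier--Levin's): blow up along each $Y_{j,v}$, use the Seshadri constant to manufacture effective divisors $D_{j,v}$ on $X$ in a fixed linear equivalence class with high multiplicity along $Y_{j,v}$, arrange them in general position, and feed them into Theorem~A. The one genuine gap is precisely the step you flag as delicate.

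From nef$+$big you only get $H^0\neq0$ for large $N$; the linear system $|N(\pi_{j,v}^*A-\gamma E_{j,v})|$ on $\tilde X_{j,v}$ may carry a base locus not contained in $E_{j,v}$, and then a ``generic section'' cannot be forced to miss a prescribed finite list of irreducible components, so your Bertini step is unjustified as written. The paper's remedy is a small perturbation: for rational $\delta'\ll\delta$ the class $\delta\,\pi_{j,v}^*A-\delta'E_{j,v}$ is ample on $\tilde X_{j,v}$, so
\[
(1+\delta)\pi_{j,v}^*A-(\epsilon_{j,v}+\delta')E_{j,v}
=(\pi_{j,v}^*A-\epsilon_{j,v}E_{j,v})+(\delta\,\pi_{j,v}^*A-\delta'E_{j,v})
\]
is nef$+$ample, hence ample. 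For $N\gg0$ it becomes \emph{very ample}, and one may then choose a section avoiding any finite set of components; via the identification
\[
H^0\bigl(X,\SO_X(N(1+\delta)A)\otimes\SI_{j,v}^{\,N(\epsilon_{j,v}+\delta')}\bigr)
\;\cong\;
H^0\bigl(\tilde X_{j,v},\SO_{\tilde X_{j,v}}(N((1+\delta)\pi_{j,v}^*A-(\epsilon_{j,v}+\delta')E_{j,v}))\bigr)
\]
(Lemma~\ref{L}) this descends to the desired $F_{j,v}\sim N(1+\delta)A$ on $X$. The paper then runs the general-position construction inductively, replacing one $Y_{j,v}$ at a time by an $F_{j,v}$ while maintaining the position hypothesis. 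The extra factor $(1+\delta)$ is harmlessly absorbed into the final $\epsilon$, so your concluding absorption step via $\lambda_{Y_{j,v},v}\le C_{j,v}\,h_A+O(1)$ becomes unnecessary.
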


Note that, as being indicated in Ru-Vojta \cite{rv},  when $D$ is an effective Cartier divisor, 
$\beta(A,D) \ge \frac{\epsilon_D(A)}{n+1}$, so the above result in the divisors case is 
 indeed a consequence of the result of Ru-Vojta (Note that Theorem D can also be stated in the same form of above).   However, the proof 
of their result is much simpler than Ru-Vojta, directly deriving from Theorem A.

This paper extends the above mentioned result of  Heier-Levin, i.e. Theorem E,  to the case where the subschemes are in $l$-subgeneral position. The analytic case is also obtained.
When $l=n$, it is just the result of Heier-Levin.  
We basically follow the argument of  Heier-Levin \cite{HL17}, together with 
technique due to Quang (\cite{Quang19}) which allow us to deal with the $l$-subgeneral position case.
Note that the problem of extending 
  Theorem D  to $l$-subgeneral position still remains open.  
 
 The following is our main result. 
 \begin{thm} [Arithmetic Part] 
 Let $X$ be a projective variety of dimension $n$ defined over a number field $k$. Let $S$ be a finite set of places of $k$.
   For each $v\in S$, let $Y_{0,v},\dots,Y_{l,v}$ be closed subschemes of $X$, defined over $k$, and in $l$-subgeneral position with $l\ge n$. 
  Let $A$ be an ample Cartier divisor on $X$. Then, for  $\epsilon > 0$, 
 there exists a proper Zariski-closed subset $Z\subset X$ such that for all points $x\in X(k)\setminus Z$,
$$\sum_{v\in S}\sum_{j=0}^l \epsilon_{Y_{j,v}}(A) \lambda _{Y_{j,v},v}(x) < [(l-n+1)(n+1)+\epsilon]h_A(x).$$
 \end{thm}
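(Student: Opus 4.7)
The plan is to combine the Seshadri-constant reduction used in Heier--Levin \cite{HL17} with the generic linear combination technique of Quang \cite{Quang19}. Fix $\epsilon>0$. For each $v\in S$ and $j\in\{0,\dots,l\}$, pick a rational $\gamma_{j,v}<\epsilon_{Y_{j,v}}(A)$ with $\epsilon_{Y_{j,v}}(A)-\gamma_{j,v}$ arbitrarily small. Writing $\pi_{j,v}\colon\tilde X_{j,v}\to X$ for the blow-up of $X$ along $Y_{j,v}$ with exceptional divisor $E_{j,v}$, the line sheaf $\pi_{j,v}^{*}A-\gamma_{j,v}E_{j,v}$ is nef by the definition of the Seshadri constant. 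For $N$ sufficiently divisible, pushing forward generic members of $|N(\pi_{j,v}^{*}A-\gamma_{j,v}E_{j,v})|$ yields an effective Cartier divisor $D_{j,v}$ on $X$ with $D_{j,v}\sim NA$ and a Weil-function estimate
\[
\tfrac{1}{N}\lambda_{D_{j,v},v}(x)\;\geq\;\gamma_{j,v}\,\lambda_{Y_{j,v},v}(x)+O(1),
\]
exactly as in \cite{HL17}. Since being in $l$-subgeneral position is a Zariski-open condition on tuples of divisors in the relevant linear systems, a generic choice of sections ensures that $D_{0,v},\dots,D_{l,v}$ inherit $l$-subgeneral position from $Y_{0,v},\dots,Y_{l,v}$.

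Next, I would apply Quang's lemma \cite{Quang19} to convert the $(l+1)$-tuple of divisors in $l$-subgeneral position into an $(n+1)$-tuple in true general position. For each $v\in S$ and each ordering $\sigma$ of $\{0,\dots,l\}$, choose generic linear combinations $T_{0,v}^{(\sigma)},\dots,T_{n,v}^{(\sigma)}$ of $D_{0,v},\dots,D_{l,v}$, each linearly equivalent to $NA$, so that the $T_{i,v}^{(\sigma)}$ are in general position on $X$ and so that, whenever $(\lambda_{D_{j,v},v}(x))_{j=0}^{l}$ is decreasingly ordered according to $\sigma$, one has the pointwise estimate
\[
\sum_{j=0}^{l}\lambda_{D_{j,v},v}(x)\;\leq\;(l-n+1)\sum_{i=0}^{n}\lambda_{T_{i,v}^{(\sigma)},v}(x)+O(1).
\]
Only finitely many orderings occur, so stratifying $X(k)$ by the tuple $(\sigma_v)_{v\in S}$ reduces, on each stratum, to an application of Evertse--Ferretti (Theorem A) to the divisors $T_{i,v}^{(\sigma_v)}$ in general position. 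This produces a proper Zariski-closed $Z\subset X$ such that
\[
\sum_{v\in S}\sum_{i=0}^{n}\tfrac{1}{N}\lambda_{T_{i,v}^{(\sigma_v)},v}(x)\;\leq\;(n+1+\epsilon')\,h_A(x),\qquad x\in X(k)\setminus Z.
\]
Combining Quang's pointwise estimate (after division by $N$ and summing over $v$) with this bound, and then replacing $\gamma_{j,v}$ by $\epsilon_{Y_{j,v}}(A)$ at the cost of an $O(\epsilon)h_A(x)$ error (using the standard bound $\lambda_{Y_{j,v},v}(x)\leq c\,h_A(x)+O(1)$ outside an exceptional set), delivers
\[
\sum_{v\in S}\sum_{j=0}^{l}\epsilon_{Y_{j,v}}(A)\,\lambda_{Y_{j,v},v}(x)\;\leq\;\bigl[(l-n+1)(n+1)+\epsilon\bigr]h_A(x)
\]
outside a proper Zariski-closed subset of $X$, as required.

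The most delicate step will be the rigorous execution of Quang's construction in this generality. In the original setting of \cite{Quang19} one works with hypersurfaces in projective space, where ``general position'' is an intersection-of-supports condition controllable by a single coefficient genericity argument; on an arbitrary projective variety $X$ one must inductively ensure that each partial linear combination stays effective, remains inside $|NA|$, and meets the previously chosen $T$'s in the correct dimension at each stage of the construction. The $l$-subgeneral position hypothesis is precisely what makes each inductive step succeed, and the combinatorial factor $l-n+1$ records the maximal number of consecutive indices in the decreasing ordering of the $\lambda_{D_{j,v},v}(x)$ that must be absorbed into a single $T_{i,v}^{(\sigma)}$ before the relevant intersection drops in dimension.
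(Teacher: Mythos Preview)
Your strategy matches the paper's exactly: reduce from subschemes to linearly equivalent divisors via blow-ups and Seshadri constants, apply Quang's trick to pass from $l$-subgeneral to general position, then invoke Evertse--Ferretti. Two technical steps, however, are glossed over in ways that would make the argument fail as written. The first is that choosing $\gamma_{j,v}<\epsilon_{Y_{j,v}}(A)$ only makes $\pi_{j,v}^{*}A-\gamma_{j,v}E_{j,v}$ nef, not ample; the system $|N(\pi_{j,v}^{*}A-\gamma_{j,v}E_{j,v})|$ may then carry base locus, so ``generic members'' need not avoid prescribed subvarieties and the position arguments can fail. The paper instead perturbs to $(1+\delta)\pi_{j,v}^{*}A-(\epsilon_{j,v}+\delta')E_{j,v}$ with $\epsilon_{j,v}+\delta'\geq\epsilon_{Y_{j,v}}(A)$: since $\delta\pi_{j,v}^{*}A-\delta'E_{j,v}$ is ample for $\delta'\ll\delta$, this class is genuinely ample and, for large $N$, very ample. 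The resulting divisors then lie in $|N(1+\delta)A|$, the extra factor $(1+\delta)$ is absorbed into the final $\epsilon$, and no separate error step replacing $\gamma_{j,v}$ by $\epsilon_{Y_{j,v}}(A)$ is needed.

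The second gap is the assertion that the $D_{j,v}$ ``inherit $l$-subgeneral position'' from the $Y_{j,v}$ by Zariski-openness: each $D_{j,v}$ strictly contains $Y_{j,v}$, so intersections among the $D$'s can be larger than among the $Y$'s, and one must actually exhibit a point of the open set. The paper does this by induction on $j$. Assuming $F_{0,v},\dots,F_{j-1,v},Y_{j,v},\dots,Y_{l,v}$ are already in $l$-subgeneral position, one passes to $\tilde X_{j,v}$ (where $\pi_{j,v}$ is an isomorphism off $E_{j,v}$), uses very ampleness there to pick a section avoiding the finitely many bad components, and invokes the identification
\[
H^0\bigl(X,\SO_X(N(1+\delta)A)\otimes\SI_{j,v}^{N(\epsilon_{j,v}+\delta')}\bigr)\;\cong\;H^0\bigl(\tilde X_{j,v},\SO_{\tilde X_{j,v}}(N((1+\delta)\pi_{j,v}^{*}A-(\epsilon_{j,v}+\delta')E_{j,v}))\bigr)
\]
to descend to an $F_{j,v}\sim N(1+\delta)A$ on $X$; the $l$-subgeneral position of $F_{0,v},\dots,F_{j,v},Y_{j+1,v},\dots,Y_{l,v}$ then follows by combining the picture off $Y_{j,v}$ with the inductive hypothesis involving $Y_{j,v}$. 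Your final paragraph rightly senses that an inductive dimension count is the crux, but locates it in Quang's step; in fact once the $F_{j,v}$ are in $l$-subgeneral position and all lie in a single very ample class, Quang's lemma applies verbatim via the associated embedding $\phi\colon X\hookrightarrow\PP^{\tilde N}$, and your explicit stratification by orderings $\sigma$ is exactly what is (implicitly) needed there.
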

 Combining this with Lemma 3.4 in \cite{rw}, it gives the the following Corollary. 
 
 \begin{cro}
 Let
 $X$ be a projective variety of dimension $n$ defined over a number field $k$. Let $S$ be a finite set of places of $k$. Let $Y_1,\dots,Y_q$ be closed subschemes of $X$ defined over $k$, in $l$-subgeneral position with $l\ge n$. Let $A$ be an ample Cartier divisor on $X$. Then, for  $\epsilon > 0$, there exists a Zariski-closed set $Z$ of $X$ such that for all $x\in X(k)\setminus Z$,
$$\sum_{j=1}^q  \epsilon_{Y_j}(A)m_S(x,Y_j) \leq [(l-n+1)(n+1)+\epsilon]h_A(x).$$
\end{cro}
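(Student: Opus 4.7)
The idea is to reduce the corollary to the Main Theorem via the standard observation that, because $Y_{1},\dots,Y_{q}$ are in $l$-subgeneral position, at any point at most $l+1$ of the Weil functions $\lambda_{Y_j,v}(x)$ can be simultaneously large: if $\lambda_{Y_{j_0},v}(x),\dots,\lambda_{Y_{j_{l}},v}(x)$ are the $l+1$ largest among $\lambda_{Y_{1},v}(x),\dots,\lambda_{Y_{q},v}(x)$, then $Y_{j_0}\cap\cdots\cap Y_{j_l}=\emptyset$ forces the remaining Weil functions to be uniformly bounded. This lets us replace the full sum over $q$ subschemes by a sum over just $l+1$ of them at each place $v$, after which the Main Theorem applies.

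Concretely, fix $\epsilon>0$ and $x\in X(k)$ outside the supports of all $Y_j$. For each $v\in S$ reorder the indices so that
$\lambda_{Y_{i_{v,0}},v}(x)\ge\lambda_{Y_{i_{v,1}},v}(x)\ge\cdots\ge\lambda_{Y_{i_{v,q-1}},v}(x)$, and put $J_v(x):=\{i_{v,0},\dots,i_{v,l}\}$. Lemma 3.4 of \cite{rw}, applied to the $l$-subgeneral family $Y_1,\dots,Y_q$, then yields
$$\sum_{j=1}^q \epsilon_{Y_j}(A)\lambda_{Y_j,v}(x) \;\le\; \sum_{j\in J_v(x)} \epsilon_{Y_j}(A)\lambda_{Y_j,v}(x) + O(1),$$
with an $O(1)$ constant depending on the data but not on $x$.

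There are only finitely many possible tuples $(J_v)_{v\in S}$ of $(l+1)$-subsets of $\{1,\dots,q\}$, so we partition $X(k)$ according to the value of $(J_v(x))_{v\in S}$. For a fixed tuple $(J_v^\ast)_{v\in S}$, the subschemes $\{Y_j:j\in J_v^\ast\}$ form an $(l+1)$-element subset of the original family and hence are in $l$-subgeneral position. Applying the Main Theorem with the enumeration $\{Y_{0,v},\dots,Y_{l,v}\}$ of $J_v^\ast$ supplies a proper Zariski-closed $Z_{(J_v^\ast)}\subset X$ off of which
$$\sum_{v\in S}\sum_{j\in J_v^\ast}\epsilon_{Y_j}(A)\lambda_{Y_j,v}(x) \;<\; \bigl[(l-n+1)(n+1)+\tfrac{\epsilon}{2}\bigr]h_A(x).$$
Taking $Z_0$ to be the finite union of these $Z_{(J_v^\ast)}$ and summing the reduction inequality over $v\in S$ gives, for $x\in X(k)\setminus Z_0$,
$$\sum_{j=1}^q \epsilon_{Y_j}(A)m_S(x,Y_j) \;\le\; \bigl[(l-n+1)(n+1)+\tfrac{\epsilon}{2}\bigr]h_A(x)+C$$
for a constant $C$ independent of $x$. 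The error $C$ is absorbed into $\tfrac{\epsilon}{2}h_A(x)$ by enlarging $Z_0$ to include the (finite, by Northcott's theorem applied to $X(k)$) set of $k$-points with $h_A(x)\le 2C/\epsilon$; the resulting $Z$ is proper Zariski-closed and yields the stated bound.

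The argument is essentially bookkeeping. The only non-formal input is the reduction lemma (Lemma 3.4 of \cite{rw}), which itself is a routine consequence of $l$-subgeneral position and the Weil-function identity $\min_i\lambda_{Y_{j_i},v}=O(1)$ whenever $\bigcap_i Y_{j_i}=\emptyset$. I anticipate no substantive obstacle beyond careful tracking of the finitely many exceptional subsets and of the bounded error constants.
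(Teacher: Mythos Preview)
Your proposal is correct and follows exactly the route the paper indicates: the paper simply states that the Corollary is obtained by combining the Main Theorem with Lemma~3.4 in \cite{rw}, and your argument is a careful unpacking of precisely that combination (pointwise reduction to the $l+1$ largest terms via the emptiness of any $(l+1)$-fold intersection, finitely many choices of index tuples $(J_v)_{v\in S}$, application of the Main Theorem to each, and absorption of the bounded error via Northcott).
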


 \begin{thm} [Analytic  Part]  Let $X$ be a complex  projective variety of dimension $n$. Let $Y_1,\dots,Y_q$ be closed subschemes of $X$ in $l$-subgeneral position with $l\ge n$.
 Let $A$ be an ample Cartier divisor on $X$.
 Let $f:\CC\to X$ be a holomorphic curve with Zariski-dense image. Then, for every $\epsilon>0$,
$$\sum_{j=1}^q \epsilon_{Y_j}(A) m_f(r,Y_j) \leq_{exc} [(l-n+1)(n+1) + \epsilon]T_{f,A}(r).$$
\end{thm}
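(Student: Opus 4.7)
My plan is to mirror the proof of the arithmetic Main Theorem (and its Corollary) in the Nevanlinna setting, invoking Cartan-type tools in place of Schmidt's subspace theorem. The argument splits naturally into two stages.

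\emph{Stage 1: the case $q=l+1$.} I first establish the bound for closed subschemes $Y_0,\dots,Y_l$ in $l$-subgeneral position. The key input is the Nevanlinna analogue of Theorem E: if $Z_0,\dots,Z_n$ are closed subschemes in general position on $X$ and $A$ is ample, then for every $\delta>0$,
$$\sum_{i=0}^n \epsilon_{Z_i}(A)\, m_f(r,Z_i) \leq_{exc} (n+1+\delta)\, T_{f,A}(r).$$
This is obtained by running Heier--Levin's argument from \cite{HL17} in the Nevanlinna setting, where Theorem A is replaced by its holomorphic counterpart (Theorem B of \cite{ru_annals}) and heights are replaced by characteristic functions.

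To pass from $l$-subgeneral to general position, I apply Quang's generic-linear-combination technique from \cite{Quang19}, adapted to closed subschemes. Specifically, for each $r$ outside an exceptional set, after sorting the quantities $\epsilon_{Y_j}(A)\, m_f(r,Y_j)$ in decreasing order and using the $l$-subgeneral hypothesis, one extracts $n+1$ indices $j_0(r),\dots,j_n(r)$ for which the subschemes $Y_{j_i(r)}$ are in general position and
$$\sum_{j=0}^l \epsilon_{Y_j}(A)\, m_f(r,Y_j) \leq (l-n+1)\sum_{i=0}^n \epsilon_{Y_{j_i(r)}}(A)\, m_f(r,Y_{j_i(r)}) + O(1).$$
Since only finitely many such reorderings occur, applying the analytic Heier--Levin bound to each choice and combining the finitely many exceptional sets yields $[(l-n+1)(n+1)+\epsilon]T_{f,A}(r)$.

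\emph{Stage 2: arbitrary $q$.} I then invoke the Nevanlinna counterpart of Lemma 3.4 of \cite{rw}: at each $r$, at most $l+1$ of the proximity functions $m_f(r,Y_j)$ can contribute substantially, again by $l$-subgeneral position. This reduces the general $q$ case to Stage 1 and produces the stated bound.

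The hard part will be adapting Quang's combinatorial reordering from the divisor setting to closed subschemes. For divisors one forms new sections $\sum c_j s_j$ within a common linear system and compares Weil functions directly; for subschemes there is no a priori common ambient line sheaf. The substitute is to pass to a high tensor power $A^N$, work with generic sections of $A^N\otimes\mathcal{I}_{Y_j}$, and compare the resulting Weil functions via the Seshadri constants $\epsilon_{Y_j}(A)$. The delicate point is verifying that the composite subschemes thus produced remain in general position and that the Seshadri contributions aggregate correctly, which is precisely what forces the factor $\epsilon_{Y_j}(A)$ on the left-hand side of the final inequality.
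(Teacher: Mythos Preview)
Your two-stage outline ($q=l+1$ first, then general $q$ via the analytic analogue of Lemma~3.4 in \cite{rw}) matches the paper's overall structure. The gap is in Stage~1.

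You claim that, after sorting, one can ``extract $n+1$ indices $j_0(r),\dots,j_n(r)$ for which the subschemes $Y_{j_i(r)}$ are in general position'' and then apply the analytic Heier--Levin bound to that subfamily. This is false in general: $l$-subgeneral position does not force any $(n+1)$-element subfamily to be in general position. On a curve ($n=1$) with $l=2$, the divisors $D_1=p+q$, $D_2=p+r$, $D_3=q+r$ (with $p,q,r$ distinct) are in $2$-subgeneral position, yet no two of them have empty intersection, so no pair is in general position. Quang's lemma (Lemma~\ref{quang}) does \emph{not} select subsets; it constructs auxiliary hyperplanes $\hat H_2,\dots,\hat H_{n+1}$ as generic linear combinations of the given $H_j$, and this only makes sense once all the objects live in a single linear system.

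The paper therefore runs the argument in the opposite order from what your Stage~1 suggests. It first replaces each $Y_i$ by a divisor $F_i\in|N(1+\delta)A|$ satisfying $\pi_i^*F_i\ge N(\epsilon_i+\delta')E_i$ on the blow-up $\tilde X_i\to X$ along $Y_i$ (using Lemma~\ref{L} to identify $H^0\bigl(X,\SO_X(N(1+\delta)A)\otimes\SI_i^{N(\epsilon_i+\delta')}\bigr)$ with the relevant space of sections on $\tilde X_i$), taking care inductively that $F_0,\dots,F_l$ remain in $l$-subgeneral position on $X$. Only \emph{after} this reduction to linearly equivalent divisors does one embed by $|N(1+\delta)A|$, apply Quang's lemma to the corresponding hyperplanes to produce $\hat H_1,\dots,\hat H_{n+1}$ in general position, and invoke Theorem~B directly (not Theorem~E as a black box). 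Your final paragraph (``work with generic sections of $A^N\otimes\mathcal{I}_{Y_j}$'') is in fact pointing at exactly this construction, but it contradicts your Stage~1 plan of feeding a subfamily of the original $Y_j$ into Theorem~E. The correct sequence is: reduce the $Y_i$ to divisors in a common linear system first, then apply Quang's linear-combination lemma, then Theorem~B.
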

The basic set ups of arithmetic part for closed subschemes can be found in \cite{Sil}, the analytic analogue of which can be found in \cite{Yam04}.
In this paper, to make the results clear, we deal with the divisor case
(i.e.  codim $Y=1$) and the
 proper closed subscheme case (codim $Y>1$)  separately. 
The divisor case is a special case of the main theorem without blowing-ups.

\section{The divisor case}
For a number field $k$, recall that $M_k$ denotes the set of places of $k$, and
that $k_\upsilon$ denotes the completion of $k$ at a place $\upsilon\in M_k$.
Norms $\|\cdot\|_\upsilon$ on $k$ are normalized so that
$$\|x\|_\upsilon = |\sigma(x)|^{[k_\upsilon:\mathbb R]} \qquad\text{or}\qquad
  \|p\|_\upsilon = p^{-[k_\upsilon:\mathbb Q_p]}$$
if $\upsilon\in M_k$ is an Archimedean place corresponding to an embedding
$\sigma\colon k\hookrightarrow\mathbb C$ or a non-Archimedean place lying
over a rational prime $p$, respectively.

An $M_k$-constant is a collection $(c_v)_{v\in M_k}$ of real constants such
that $c_v=0$ for all but finitely many $v$.
Heights are logarithmic and relative to the number field used as a base field which is always denoted by $k$.  For example, if $P$ is a point
on $\mathbb P^n_k$ with homogeneous coordinate $[x_0:\dots:x_n]$ in $k$, then
$$h(P) = h_{\SO_{\PP^n_k}(1)}(P)
  = \sum_{\upsilon\in M_k} \log\max\{\|x_0\|_\upsilon,\dots,\|x_n\|_\upsilon\}
  \;.$$

Let $X$ be a non-singular projective variety over $\mathbb C$, let $D$ be an effective Cartier divisor
on $X$, and let $s$ be a canonical section of $\SO(D)$ (i.e.,
a global section $s$ such that $(s=0)=D$).  Choose a Hermitian metric $\|\cdot\|$
on $\SO(D)$.  In Nevanlinna theory, one often encounters the
function
\begin{equation}\label{eq_weil_intro}
  \lambda_D(x) := -\log\|s(x)\|\;;
\end{equation}
this is a real-valued function on $X(\mathbb C)\setminus\Supp D$.
It is linear in $D$ (over a suitable domain), so by linearity and continuity
it can be extended to a definition of $\lambda_D$ for a general Cartier divisor
$D$ on $X$.

Weil functions are counterparts to such functions in number theory.
For its definition and detailed properties,  see \cite[Ch.~10]{lang} or \cite[Sect.~8]{vojta_cm}.  For reader's convenience we list some properties of Weil functions for varieties and divisors defined over $k$ (the complex case is similar).
\begin{proposition}(see \cite{rv})\label{weil_eff} Let $k$ be a number field. 
Let $X$ be a normal complete variety, let $D$ be a Cartier divisor on $X$, both defined over $k$. 
Let $\lambda_D$ be a Weil function for $D$.  Then the following
conditions are equivalent.
\begin{enumerate}
\item $D$ is effective.
\item $\lambda_D$ is bounded from below by an $M_k$-constant.
\item for all $v\in M_k$, $\lambda_{D,v}$ is bounded from below.
\item there exists $v\in M_k$ such that $\lambda_{D,v}$ is bounded from below.
\end{enumerate}
\end{proposition}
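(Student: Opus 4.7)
The plan is to prove the cyclic implications $(1)\Rightarrow(2)\Rightarrow(3)\Rightarrow(4)\Rightarrow(1)$. The middle two implications are formal: an $M_k$-constant lower bound is in particular a pointwise lower bound at every $v$, and a lower bound at every $v$ trivially yields one at some $v$. So all the content lies in $(1)\Rightarrow(2)$ and $(4)\Rightarrow(1)$.

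For $(1)\Rightarrow(2)$, take the canonical global section $s$ of $\OO_X(D)$ and fix a $v$-adic metric $\|\cdot\|_v$ on $\OO_X(D)$ at every place. By the standard uniqueness of Weil functions up to $M_k$-bounded functions, $\lambda_D(x)+\log\|s(x)\|_v$ is $M_k$-bounded. Next, spread $X$ and $D$ out to a proper model $\XX\to\Spec\OO_k$ to which $s$ extends as an integral global section $\tilde s$. For every non-archimedean place $v$ of good reduction for this model---which excludes only finitely many $v$---the induced model metric satisfies $\|\tilde s(x)\|_v\le 1$ uniformly on $X(k_v)=\XX(\OO_{k_v})$, hence $\lambda_{D,v}\ge 0$ up to the $M_k$-bounded term. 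At each of the finitely many remaining archimedean and bad-reduction places, $v$-adic compactness of $X(k_v)$ together with continuity of $\|s\|_v$ produce a uniform upper bound $M_v<\infty$ on $\|s\|_v$; combining everything yields the required $M_k$-constant lower bound on $\lambda_D$.

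The nontrivial implication is $(4)\Rightarrow(1)$. Assume $\lambda_{D,v_0}$ is bounded below at some $v_0\in M_k$, and suppose for contradiction that $D$ is not effective. Since $X$ is normal, decompose $D=\sum_i n_i E_i$ as a finite $\ZZ$-linear combination of distinct prime divisors $E_i$; by assumption some coefficient $n_{i_0}$ is negative. Pick a point $P\in E_{i_0}$ which is smooth on $X$ and lies on no other $E_j$, and a small affine chart $U$ around $P$ in which $D$ is cut out by a single rational function $f\in K(X)^\times$ with $\ord_{E_{i_0}}(f)=n_{i_0}<0$; in particular $f$ has a pole along $E_{i_0}\cap U$. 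Since $\lambda_D$ is a Weil function, on $U\setminus\Supp D$ it has the local form $\lambda_{D,v_0}(x)=-\log\|f(x)\|_{v_0}+O(1)$. Using the $v_0$-adic implicit function theorem in smooth coordinates at $P$, I would produce $\overline{k_{v_0}}$-points $x_m\in U\setminus\Supp D$ converging $v_0$-adically to $P$; along any such sequence $\|f(x_m)\|_{v_0}\to\infty$ and hence $\lambda_{D,v_0}(x_m)\to-\infty$, contradicting the assumed lower bound. The main obstacle is this density/approximation step: one must ensure the existence of $\overline{k_{v_0}}$-points of $U$ converging to $P$ from outside $\Supp D$. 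Once that is in hand, the rest of the argument is a direct local computation.
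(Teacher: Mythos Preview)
The paper does not prove this proposition; it is quoted from \cite{rv} and used as a black box, so there is no proof in the paper to compare your attempt against.

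That said, your argument is the standard one and is essentially correct. For $(1)\Rightarrow(2)$, the integral-model argument with the canonical section is how one usually shows $\lambda_{D,v}\ge 0$ for all but finitely many $v$, with compactness of $X(k_v)$ handling the remaining places. For $(4)\Rightarrow(1)$, the contrapositive via a negatively-weighted prime component $E_{i_0}$ is the right idea, and the density step you flag is routine once $P$ is chosen smooth on $X$: \'etale (equivalently, $v_0$-analytic) local coordinates identify a $v_0$-adic neighborhood of $P$ in $X(\overline{k_{v_0}})$ with a polydisc in which $E_{i_0}$ is a coordinate hyperplane and the other $E_j$ are absent, so points approaching $P$ from outside $\Supp D$ are plentiful. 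The only thing you should make explicit is that the boundedness in (3) and (4) is understood on $X(\overline{k_v})\setminus\Supp D$ (the usual convention for Weil functions), so that your use of $\overline{k_{v_0}}$-points in the limit argument is legitimate.
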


\begin{corollary}
If $D_1-D_2$ is effective. Then $\lambda_{D_1}\geq \lambda_{D_2}$ up to an $M_k$-constant.
\end{corollary}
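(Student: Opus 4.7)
The plan is to derive this directly from Proposition 2.1 together with the (quasi-)linearity of Weil functions. Since any two Weil functions for the same Cartier divisor differ by an $M_k$-bounded function, the assignment $D \mapsto \lambda_D$ is well-defined up to $M_k$-constants and is additive in $D$; see, for example, \cite[Ch.~10]{lang} or \cite[Sect.~8]{vojta_cm}.

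First, I would form the Cartier divisor $D := D_1 - D_2$. By additivity of Weil functions, we have
$$
\lambda_{D_1} = \lambda_{D_2} + \lambda_{D_1-D_2}
$$
up to an $M_k$-constant. Thus the corollary reduces to showing $\lambda_{D_1-D_2} \geq c$ for some $M_k$-constant $c = (c_v)_{v\in M_k}$.

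Next, I would invoke the hypothesis that $D_1 - D_2$ is effective, together with the equivalence $(1)\Leftrightarrow(2)$ in Proposition 2.1 applied to $D = D_1 - D_2$: effectivity is equivalent to $\lambda_{D_1-D_2}$ being bounded below by an $M_k$-constant. Combining this with the additivity displayed above gives $\lambda_{D_1} \geq \lambda_{D_2}$ up to an $M_k$-constant, as desired. There is no substantive obstacle here; the content of the corollary is entirely encoded in Proposition 2.1, and the only point to be a bit careful about is choosing compatible representatives of the Weil functions so that the additivity $\lambda_{D_1} = \lambda_{D_2} + \lambda_{D_1-D_2}$ is exact (otherwise it holds only up to an $M_k$-constant, which is already absorbed in the conclusion).
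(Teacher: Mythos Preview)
Your proof is correct and is precisely the argument the paper has in mind: the corollary is stated without proof immediately after Proposition~\ref{weil_eff} because it follows at once from that proposition via the additivity $\lambda_{D_1}=\lambda_{D_2}+\lambda_{D_1-D_2}$ (up to an $M_k$-constant). There is nothing to add.
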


\begin{proposition}(see \cite[Theorem 8.8]{vojta_cm})
Let $X$ be a complete variety over a number field $k$.

(a) If $D_1,D_2$ are Cartier divisors on $X$, let $\lambda_{D_1},\lambda_{D_2}$ be Weil functions for $D_1,D_2$ respectively, then $\lambda_{D_1} + \lambda_{D_2}$ extends uniquely to a Weil function for $D_1+D_2$.

(b) If $\lambda$ is a Weil function for a Cartier divisor $D$ on $X$, and if $f:X'\to X$ is a morphism of $k$-varieties such that $f(X')\not\subset \Supp D$, then $x\to \lambda(f(x))$ (defined on $X'\setminus \Supp f^*D$) is a Weil function for the Cartier divisor $f^*D$ on $X'$, which we denote by $\lambda_{f^*D}$.
\end{proposition}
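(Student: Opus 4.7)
The plan is to reduce both assertions to the local model of a Weil function. Recall that $\lambda_D$ for a Cartier divisor $D$ on $X$ is characterised by the requirement that on any affine open $U \subseteq X$ where $D|_U = \divisor(f)$ for some $f \in k(X)^{\times}$, one has
$$\lambda_{D,v}(x) = -\log\|f(x)\|_v + \alpha_v(x),$$
with $\alpha_v$ a continuous function on $U(\overline{k_v})$ which is locally $M_k$-bounded uniformly in $v$. Consequently, each assertion amounts to showing that the required local form persists under the relevant operation on divisors, together with a continuity argument for the uniqueness of the extension to the full complement of the support.

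For (a), pass to a common affine cover $\{U_i\}$ of $X$ on which both $D_1|_{U_i} = \divisor(f_{1,i})$ and $D_2|_{U_i} = \divisor(f_{2,i})$. On the open locus $U_i \setminus (\Supp D_1 \cup \Supp D_2)$ the identity
$$\lambda_{D_1} + \lambda_{D_2} = -\log\|f_{1,i} f_{2,i}\|_v + (\alpha_{1,i,v} + \alpha_{2,i,v})$$
exhibits the sum as a Weil function for $D_1+D_2$, since $f_{1,i} f_{2,i}$ is a local equation for $D_1+D_2$ on $U_i$ and the sum of the $\alpha$'s remains continuous and $M_k$-bounded. If $D_1$ and $D_2$ share canceling components, the right-hand side extends continuously across the cancellation locus, and this extension is unique on $X\setminus\Supp(D_1+D_2)$ because two continuous functions that agree on a dense open subset agree everywhere. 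For (b), let $g\colon X' \to X$ denote the given morphism; the condition $g(X') \not\subset \Supp D$ guarantees that $g^*D$ is a well-defined Cartier divisor, and on $g^{-1}(U)$ one has $g^*D = \divisor(f\circ g)$, so
$$(\lambda\circ g)(x) = -\log\|(f\circ g)(x)\|_v + (\alpha_v\circ g)(x)$$
exhibits $\lambda\circ g$ in the local form required of a Weil function for $g^*D$. Continuity of $\alpha_v \circ g$ in the $v$-topology is immediate from continuity of both factors.

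The main obstacle is the uniform bookkeeping of $M_k$-boundedness across all places $v \in M_k$: a priori the correction terms $\alpha_v$ are $v$-adically continuous functions depending on $v$, and one must verify that the operations of taking sums and pulling back preserve the property that they are controlled by a single $M_k$-constant. This is handled via the formalism of a \emph{presentation} of a Cartier divisor in \cite[\S8]{vojta_cm}: tensor products of presentations correspond to addition of divisors, which yields (a), and pullbacks of presentations correspond to pullbacks of divisors, which yields (b); in both cases the uniform $M_k$-boundedness of $\alpha_v$ is a formal consequence of the finiteness of the data defining a presentation, and the compactness arguments needed when $X$ is projective (or reduction to the projective case when $X$ is merely complete) can be made place by place.
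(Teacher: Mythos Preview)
The paper does not supply its own proof of this proposition: it is stated with the parenthetical citation ``(see \cite[Theorem 8.8]{vojta_cm})'' and nothing further, so there is no argument in the paper to compare against. Your sketch is a faithful outline of the standard argument in the reference---working in the local model $\lambda_{D,v} = -\log\|f\|_v + \alpha_v$ on a trivializing open, checking that sums and pullbacks of local equations give local equations for $D_1+D_2$ and $f^*D$ respectively, and appealing to the presentation formalism in \cite[\S8]{vojta_cm} for the uniform $M_k$-boundedness---so in spirit your approach coincides with what the cited source does.
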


Let $X$ be a  complex  projective variety and $L\rightarrow X$ be a positive line bundle. 
Denote by $\|\cdot\|$ a Hermitian metric in $L$ and by $\omega$ its Chern form. 
Let $f: {\mathbb C}\rightarrow X$ be a holomorphic map. We define
$$T_{f, L}(r)=\int_1^r {dt\over t} \int_{|z|<t} f^*\omega,$$
and call it the {\it characteristic} (or {\it height}) function of $f$ with respect to $L$. It is independent of, up to bounded term,
the choices of the metric on $L$.  The definition can be extended to arbitrary line bundle, since any line bundle 
$L$ can be written as $L=L_1\otimes L^{-1}_2$ with $L_1, L_2$ are both positive.
For an effective divisor on $X$, we define a function of $r$,  for any holomorphic map $f: {\mathbb C}\rightarrow X$ with $f(\CC)\not\subset \Supp D$,
$$m_f(r, D):=\int_0^{2\pi}\lambda_D (f(re^{i\theta})){d\theta\over 2\pi}.$$

Let $X$ be a complete variety over a number field $k$, and $D$ be a Cartier divisor on $X$.
The height $h_D(x)$ for points $x\in X(\bar{k})$ is defined, up to $O(1)$, as 
$$h_{D}(x)={1\over [F:k]}\sum_{w\in M_F} \lambda_{D,w}(x)$$
for any number field $F\subset k(x)$. It is independent, up to $O(1)$, of the choice of $F$, as well as the choice of 
Weil functions (see    \cite{vojta_cm}, Page 143-144). 
 If $L$ is a line sheaf on $X$, then we define $h_{L}(x)=h_{D}(x)$ for points $x\in X(\bar{k})$, where $D$ is any Cartier divisor
for which ${\mathcal O}(D)\cong L$. Again, it is only defined up to $O(1)$.  This definition agrees with the definition given earlier in the 
case when $X$ is projective. Let $S\subset M_k$ be a finite set of places on $X$. We have the function $m_S(x,D)$ defined  in the previous section as the analogy of $m_f(r,D)$ in Diophantine geometry.

\begin{definition} Let $V$ be a  normal projective variety 
and $X\subset V$ be an irreducible normal subvariety of dimension $n$. The Cartier divisors $D_1,\dots,D_q$ on $V$ are called {\it in $l$-subgeneral position on $X$} if for every choice $J\subset \{1,\dots,q\}$ with $\# J\leq l+1$, 
$$\dim \left((\bigcap _{j\in J} \Supp D_j)\cap X\right) \leq l-\#J.$$
When $V=X$, then we say that the divisors $D_1,\dots,D_q$ on $X$ are in $l$-subgeneral position.
Similar definition applies for subschemes $Y_1, \dots, Y_q$. 
\end{definition}
The notion ``in $n$-subgeneral position" means the same as "in general position" on a variety of dimension $n$.  In the rest of the paper, by convenience, we just write $D_1\cap D_2$ to denote $\supp D_1\cap \supp D_2$. 

The following is a reformulation of Theorem 1.22 in \cite{Sha94} which plays important role in the arguments regarding dimensions.

\begin{proposition}\label{S}
Let $X$ be a projective variety, and $A$ be an ample Cartier divisor on $X$. Let $F\subset X$ be a proper irreducible subvariety. Then either $F\subset A$ or $\dim (F\cap A) \leq \dim F - 1$.
\end{proposition}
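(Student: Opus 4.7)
The plan is to derive the statement directly from Krull's Hauptidealsatz, which is the local content of the principal ideal theorem that Shafarevich's Theorem 1.22 packages geometrically. I interpret $F\subset A$ as $F\subset \Supp A$ and $F\cap A$ as $F\cap\Supp A$, so that the conclusion depends only on the underlying subset $\Supp A\subset X$.

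The key steps are as follows. Assume $F\not\subset A$; it suffices to bound $\dim(F\cap\Supp A)$. Since $A$ is a Cartier divisor, every point $p\in \Supp A$ lies in some affine open $U_p\subset X$ on which $A|_{U_p}$ is cut out by the vanishing of a single regular function $f_p\in\mathcal{O}_X(U_p)$. For any such $U_p$ meeting $F$, the restriction $f_p|_{F\cap U_p}$ is a nonzero regular function on the irreducible affine variety $F\cap U_p$, because $F$ is irreducible and not contained in $\Supp A$. Krull's Hauptidealsatz, applied to the coordinate ring of $F\cap U_p$, then forces every irreducible component of the zero locus of $f_p|_{F\cap U_p}$ to have codimension exactly $1$ in $F\cap U_p$. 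Covering $F\cap\Supp A$ by finitely many such affines and combining the local estimates yields $\dim(F\cap A)\leq \dim F-1$.

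The ampleness of $A$ plays no essential role in this argument; the same bound holds for any Cartier divisor, with ``ample'' presumably included to match the hypotheses used in later sections. There is therefore no real obstacle, and the proposition is best viewed as a geometric packaging of the Hauptidealsatz for convenient dimension bookkeeping in the main proof. As an alternative (equivalent) route, one could replace $A$ by a very ample multiple $mA$, using that $\Supp(mA)=\Supp A$ so that both alternatives in the conclusion are unchanged, and then realize $mA$ as a hyperplane section via the corresponding projective embedding $X\hookrightarrow\PP^N$; the result would then follow from the classical hyperplane-section dimension theorem applied to $F\subset\PP^N$.
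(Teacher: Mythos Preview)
Your argument is correct. The paper itself gives no proof at all; it simply declares the proposition to be ``a reformulation of Theorem~1.22 in \cite{Sha94}'' and moves on. Your Hauptidealsatz argument supplies exactly the local content that underlies that reference, and your observation that ampleness plays no role is accurate (the paper only needs the proposition for divisors that happen to be ample or very ample in the subsequent arguments, which presumably explains the hypothesis). Your alternative route---pass to a very ample multiple and invoke the hyperplane-section dimension theorem in $\PP^N$---is essentially what Shafarevich's Theorem~1.22 packages, so that version aligns most directly with the paper's citation. Either way there is nothing to correct.
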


In this section, we prove the following theorem.
\begin{theorem}[Arithmetic Result in the Divisors case]\label{thm:divisorcase}
Let $X$ be a projective variety of dimension $n$ defined over a number field $k$.  Let $S$ be a finite set of places of $k$.
  For each $v\in S$, let $D_{0,v},\dots,D_{l,v}$ be Cartier divisors on  $X$, defined over $k$, and in $l$-subgeneral position with $l\ge n$.  
  Let $A$ be an ample Cartier divisor on $X$. Then, for  $\epsilon > 0$, 
 there exists a proper Zariski-closed subset $Z\subset X$ such that for all points $x\in X(k)\setminus Z$,
$$\sum_{v\in S}\sum_{j=0}^l \epsilon_{D_{j,v}}(A) \lambda _{D_{j,v},v}(x) < [(l-n+1)(n+1)+\epsilon]h_A(x),$$
where $\epsilon_{D_{j, v}}(A)$ are the Seshadri constants of $D_{j, v}$ with respect to $A$. 
\end{theorem}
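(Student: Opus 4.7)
The plan is to combine Heier--Levin's reduction to Theorem A (Evertse--Ferretti), which extracts the Seshadri constant and the factor $n+1$, with Quang's generic linear combination technique, which converts the $l$-subgeneral position hypothesis into general position at the cost of a multiplicative factor of $(l-n+1)$ on the right-hand side. Concretely, I aim to produce, for $x$ outside a proper Zariski-closed subset $Z$, the pair of inequalities
$$\sum_{v\in S}\sum_{j=0}^{l}\epsilon_{D_{j,v}}(A)\lambda_{D_{j,v},v}(x) \le (l-n+1)\sum_{v\in S}\sum_{j\in R_v}\epsilon_{D_{j,v}}(A)\lambda_{D_{j,v},v}(x)+O(1)$$
for suitable index subsets $R_v\subset\{0,\ldots,l\}$ of size $n+1$, and then bound the right-hand side above by $(n+1+\epsilon')h_A(x)$ using Theorem A.

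First I would carry out the Quang step: for each place $v\in S$, an adaptation of Quang's Nochka-type weighted inequality from the hyperplane setting to effective Cartier divisors in $l$-subgeneral position on $X$ produces, for each $x\in X(k)$, a subset $R_v(x)\subset\{0,\ldots,l\}$ of size $n+1$ such that $\{D_{j,v}\}_{j\in R_v(x)}$ is in general position on $X$ and the displayed pointwise inequality above holds. Since there are only finitely many such subsets, by partitioning $X(k)$ and taking a finite union of exceptional sets I may assume that $R_v$ is independent of $x$. This decomposition step, which uses Proposition \ref{S} to control intersection dimensions on $X$ throughout the inductive selection, is the step I expect to be the main technical obstacle, since one must track effective Cartier divisors (rather than linear forms on a fixed projective space) through Quang's construction while preserving the Seshadri constant weights.

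Next I would apply the Heier--Levin reduction to the $n+1$ divisors $\{D_{j,v}\}_{j\in R_v}$ in general position. For any $\gamma_{j,v}<\epsilon_{D_{j,v}}(A)$, the divisor $A-\gamma_{j,v}D_{j,v}$ is nef, so the small perturbation $(1+\delta)A-\gamma_{j,v}D_{j,v}$ is ample; choosing $N$ large and divisible, $NA-N\gamma'_{j,v}D_{j,v}$ is very ample (with $\gamma'_{j,v}=\gamma_{j,v}/(1+\delta)$), and a generic global section yields an effective $E_{j,v}\sim NA-N\gamma'_{j,v}D_{j,v}$ such that the divisors $F_{j,v}:=N\gamma'_{j,v}D_{j,v}+E_{j,v}\sim NA$ are in general position on $X$ (by Bertini and the genericity of the $E_{j,v}$, using that the $D_{j,v}$ are already in general position). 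Applying Theorem A to the family $\{F_{j,v}\}_{j\in R_v,\,v\in S}$, all linearly equivalent to $NA$, gives
$$\sum_{v\in S}\sum_{j\in R_v}\frac{1}{N}\lambda_{F_{j,v},v}(x)\le (n+1+\epsilon'')h_A(x)+O(1)$$
outside a proper Zariski-closed subset. Combined with $N\gamma'_{j,v}\lambda_{D_{j,v},v}(x)\le \lambda_{F_{j,v},v}(x)+O(1)$, which follows from additivity of Weil functions and effectivity of $E_{j,v}$ (Proposition 2.2), this yields the target bound on $\sum_v\sum_{j\in R_v}\gamma'_{j,v}\lambda_{D_{j,v},v}(x)$.

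Combining the two steps and letting $\gamma_{j,v}\uparrow\epsilon_{D_{j,v}}(A)$, $\delta\downarrow 0$, and $\epsilon''\downarrow 0$ produces the desired bound $[(l-n+1)(n+1)+\epsilon]h_A(x)$. The final exceptional set $Z$ is the finite union of the Zariski-closed sets supplied by Theorem A over the finitely many admissible choices of $(R_v)_{v\in S}$ and of approximating rational values $\gamma_{j,v}$; the residual $O(1)$ terms are absorbed into $\epsilon\,h_A(x)$ by further excluding a finite set of points of bounded height, which is possible by Northcott's theorem since $A$ is ample and $k$ is a number field.
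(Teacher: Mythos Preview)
Your plan reverses the order of the two main reductions relative to the paper, and this reversal creates a genuine gap. In your Step~1 you assert that for each $x$ one can choose a subset $R_v(x)\subset\{0,\dots,l\}$ of cardinality $n+1$ such that $\{D_{j,v}\}_{j\in R_v(x)}$ is in general position on $X$. This is false in general: there are configurations of $l+1$ effective Cartier divisors in $l$-subgeneral position on an $n$-fold for which \emph{no} $(n+1)$-element subset is in general position. For instance, take $X=\PP^1\times\PP^1$ (so $n=2$), $l=4$, set $C_1=\{0\}\times\PP^1$, $C_2=\{\infty\}\times\PP^1$, and let
\[
D_0=C_1+C_2,\quad D_i=C_1+(\PP^1\times\{a_i\})\ (i=1,2),\quad D_i=C_2+(\PP^1\times\{a_i\})\ (i=3,4)
\]
with $a_1,\dots,a_4$ distinct. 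One checks directly that $D_0,\dots,D_4$ are in $4$-subgeneral position, yet the only pairs whose intersection is zero-dimensional are $(D_i,D_j)$ with $i\in\{1,2\}$ and $j\in\{3,4\}$; since any triple of indices contains some other pair, no three of the $D_i$ are in general position. Thus your Step~1 cannot even begin, and the subsequent appeal to Theorem~A has no input.

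The point of Lemma~\ref{quang} is precisely that it manufactures \emph{linear combinations} $\hat L_t$ of the defining forms rather than selecting a subset: a generic element of $\mathrm{span}_k(L_2,\dots,L_{l-n+t})$ avoids every top-dimensional component of $H_1'\cap\cdots\cap H_{t-1}'\cap X$ even when no individual $L_j$ does. But forming such combinations requires the divisors to be hyperplane sections for a common very ample linear system. That is why the paper performs the Heier--Levin step \emph{first}: it builds $F_{i,v}\sim N(1+\delta)A$ with $F_{i,v}\ge N\epsilon_{D_{i,v}}(A)\,D_{i,v}$ and with $F_{0,v},\dots,F_{l,v}$ still in $l$-subgeneral position (this inductive construction is the analogue of your Bertini step, but done for all $l+1$ divisors rather than only $n+1$), then embeds $X$ by $|N(1+\delta)A|$ so that the $F_{i,v}$ become hyperplanes, and only then invokes Lemma~\ref{quang} and Theorem~A. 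Reordering your two steps in this way --- Heier--Levin reduction on all $l+1$ divisors first, Quang's linear combinations afterward --- is what repairs the argument.
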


We give a reformulation of a key lemma which is originally due to Quang (see Lemma 3.1 in \cite{Quang19}). We include a proof here for the reader's convenience.
\begin{lemma}[\cite{Quang19}, reformulated]\label{quang}
Let $k$ be a number field. Let $X\subset \PP^M_k$ be an irreducible projective variety of dimension $n$. Let $H_1,\dots, H_{l+1}$ be hyperplanes in $\PP^M_k$ which are in $l$-subgeneral position on $X$ with $l\ge n$. Let $L_1,\dots,L_{l+1}$ be the normalized linear forms defining $H_1,\dots,H_{l+1}$ respectively. Then there exist linear forms $L_1',\dots,L_{n+1}'$ of $M+1$ variables such that 

(a) $L_1' = L_1$.

(b) For every $t\in \{2,\dots,n+1\}$, $L'_t \in \mathrm{span}_k(L_2,\dots,L_{l-n+t})$  i.e., $L'_t$ is a $k$-linear combination of $L_2,\dots, L_{l-n+t}$.

(c) Let $H_j'$, $j = 1,\dots, n+1$, be the hyperplanes defined by $L'_j$, $j = 1,\dots,n+1$. Then they are in general position on $X$.
\end{lemma}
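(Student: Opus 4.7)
The plan is to construct $L'_1, \dots, L'_{n+1}$ one at a time by induction on $t$, maintaining at the end of step $t$ the strengthened inductive hypothesis that for every $J \subset \{1, \dots, t\}$, $\dim \bigl(X \cap \bigcap_{j \in J} H'_j\bigr) \leq n - |J|$. One sets $L'_1 = L_1$, so condition (a) holds and (at $t = 1$) the hypothesis follows from $l$-subgeneral position together with the standing understanding that $X \not\subset H_1$. At each subsequent step $t \in \{2, \dots, n+1\}$ one picks $L'_t$ from $V_t := \spn_k(L_2, \dots, L_{l-n+t})$, which guarantees (b); condition (c) is exactly the inductive hypothesis at $t = n+1$.

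At step $t$, let $\mathcal{W}$ denote the finite collection of all irreducible components $W$ of $X \cap \bigcap_{j \in J'} H'_j$, as $J'$ ranges over subsets of $\{1, \dots, t-1\}$, that attain the maximal dimension $n - |J'|$. Applying Proposition~\ref{S} with $A$ the very ample hyperplane section $H'_t|_X$, if one can arrange that $L'_t$ does not vanish identically on any $W \in \mathcal{W}$, then intersecting with $H'_t$ drops the dimension of each maximal-dimensional component by exactly one, and a short check shows the strengthened hypothesis is preserved at step $t$.

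The crux is the claim that for every $W \in \mathcal{W}$, at least one of $L_2, \dots, L_{l-n+t}$ fails to vanish identically on $W$. I would prove this by contradiction with a case split on whether $1 \in J'$, where $W$ is a maximal-dimensional component of $X \cap \bigcap_{j \in J'} H'_j$. If $1 \in J'$, then $W \subset H_1$, and the vanishing of every $L_2, \dots, L_{l-n+t}$ on $W$ would force $W \subset X \cap H_1 \cap H_2 \cap \cdots \cap H_{l-n+t}$; the index set has size $l-n+t \leq l+1$, so $l$-subgeneral position gives $\dim W \leq l - (l-n+t) = n-t$, contradicting $\dim W = n - |J'| \geq n - (t-1)$. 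If $1 \notin J'$, then $J' \subset \{2, \dots, t-1\}$ forces $|J'| \leq t-2$, and the same vanishing assumption gives $\dim W \leq l - (l-n+t-1) = n-t+1$, contradicting $\dim W \geq n - (t-2) = n-t+2$.

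Once the claim is in hand, for each $W \in \mathcal{W}$ the set of $L \in V_t$ vanishing on $W$ is a proper $k$-linear subspace; since $\mathcal{W}$ is finite and $k$ is infinite (being a number field), the union of these proper subspaces does not exhaust $V_t$, and any $L'_t$ in the complement completes the induction step. The main obstacle is navigating the $1 \notin J'$ regime: the dimension estimate only barely closes because the constraint $J' \subset \{1, \dots, t-1\}$ (one strictly less than the current index $t$) forces $|J'| \leq t-2$, and this asymmetric role of $L_1$ is precisely why Quang's span is $\spn_k(L_2, \dots, L_{l-n+t})$ rather than $\spn_k(L_1, \dots, L_{l-n+t})$; any attempt to enlarge the span would leave no slack in the $1 \in J'$ regime.
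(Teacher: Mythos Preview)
Your argument is correct and follows the same inductive construction as the paper; the only difference is that you carry the stronger invariant that \emph{every} subset $J'\subset\{1,\dots,t-1\}$ meets the dimension bound (hence your case split on whether $1\in J'$), whereas the paper tracks only the full set $J'=\{1,\dots,t-1\}$ and implicitly uses that hyperplane sections have codimension one to propagate the bound to all sub-intersections. Your route is slightly more self-contained, the paper's slightly shorter.
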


\begin{proof}
Let $H_1' = H_1.$ Let $W_{l-n+2}$ denote the $k$-vector space  spanned by $L_2,\dots,L_{l-n+2}$. For each irreducible component $\Gamma$ 
of $H_1 \cap X$ with $\dim \Gamma=n-1$,  let 
$V_{\Gamma}\subset W_{l-n+2}$ be the linear subspace of $W_{l-n+2}$ consisting of $k$-linear combinations of $L_2,
\dots,L_{l-n+2}$ that vanish entirely on $\Gamma$.  Since $H_1,\dots,H_{l+1}$ are in $l$-subgeneral position on $X$, 
$$\dim(H_1\cap H_2 \cap \dots \cap H_{l-n+2}\cap X)\leq l-(l-n+2)=n-2 <\dim \Gamma,$$
there must be some $H_j$ with $2\leq j\leq l-n+2$ such that $\Gamma \not\subset H_j$, so $L_j$ is not in $V_{\Gamma}$.
 Hence $V_{\Gamma}$ is a proper subspace of $W_{l-n+2}$. Observe that the choices of $\Gamma$ is finite, we thus have
 $$W_{l-n+2}\setminus \bigcup _{\Gamma}V_{\Gamma}\neq \emptyset,$$
where the union is taken over all irreducible component $\Gamma$ of $H_1\cap X$ with $\dim \Gamma = n-1$. 
Take an $L_2' \in W_{l-n+2}\setminus \bigcup _{\Gamma}V_{\Gamma}$. Then $L_2'$ does not vanish on any
 irreducible component $\Gamma$  of $H_1'\cap X$ with $\dim \Gamma=n-1$. Let $H_2'$ be the hyperplane defined by $L_2'$. Then, by Proposition \ref{S},  
 $\dim(H_1'\cap H_2'\cap X)= \dim(H_1\cap H_2'\cap X)\leq n-2$. Hence $H_1',H_2'$ are in general position on $X$.
 
Now consider irreducible components $ \Gamma' $  of $(H_1'\cap H_2'\cap X)$ with $\dim \Gamma'=n-2$.  
Let $W_{l-n+3}$ be the $k$-vector space spanned by $L_2,\dots,L_{l-n+3}$ and 
 $V_{\Gamma'}\subset W_{l-n+2}$ be the $k$-linear subspace of $W_{l-n+3}$ given by all $k$-linear combinations of $L_2,\dots, L_{l-n+3}$ that vanishes entirely on $\Gamma'$. 
Similar to the argument above, for every such $\Gamma'$, 
$V_{\Gamma'}$ is proper linear subspace of $W_{l-n+3}$ since $\dim(H_1\cap H_2\cap H_3,\dots,H_{l-n+3})\leq n-3 < \dim \Gamma' = n-2$. Thus
$$W_{l-n+3}\setminus \bigcup_{\Gamma'}V_{\Gamma'}\neq \emptyset.$$
Take an  $L_3'\in W_{l-n+3}\setminus \bigcup_{\Gamma'}V_{\Gamma'}$. By the construction $L_3'$ does not vanish on any irreducible component $\Gamma'$  of $(H_1'\cap H_2'\cap X)$ with $\dim \Gamma'=n-2$. Let $H_3'$ be the hyperplane defined by $L'_3$, then by Proposition \ref{S}, $\dim(H_1'\cap H_2'\cap H_3'\cap X)\leq n-3$, i.e. 
$H_1',H_2',H_3'$ are in general position on $X$.

Repeating the argument we obtain $H_1',\dots,H_{n+1}'$ which are in general position on $X$.  
\end{proof}

\noindent{\it Proof of Theorem \ref{thm:divisorcase}}. 
Fix  $\epsilon > 0$.
 Choose rational number $\delta > 0$ such that 
 $$\delta(l-n+1)+\delta(l-n+1)(n+1+\delta)<\epsilon.$$ 
Then, for small enough positive rational number $\delta'$ depending on $\delta$, 
 $\delta A - \delta'D_{i,v}$ is  $\QQ$-ample for all $i=0, \dots, l$ and $v\in S$. We fix such $\delta'$. 
By the definition of the Seshadri constant, there exists a rational number $\epsilon_{i, v}>0$ such that 
\begin{equation}\label{a1}
  \epsilon_{D_{i,v}}(A)-\delta'\leq\epsilon_{i,v}\leq\epsilon_{D_{i, v}}(A)
  \end{equation}
  and that  $A-\epsilon_{i,v}D_{i, v}$ is  $\QQ$-nef for all $i=0, \dots, l$ and $v\in S$.  
     With such choices of $\delta,\delta'$ and $\epsilon_{i,v}$, we have   $(1+\delta)A-(\epsilon_{i,v}+\delta')D_{i,v}$ is $\QQ$-ample. 
Take natural number $N$ large enough such that $N(1+\delta)A$ and $N[(1+\delta) A -(\epsilon_{i,v}+\delta')D_{i, v}]$ become very ample integral divisors for all
  $i=0, \dots, l$ and $v\in S$.  
  
  Fix $v\in S$, we claim that {\it there are  sections $$s_{i,v}\in H^0(X, N(1+\delta)A - N(\epsilon_{i, v}+\delta')D_{i, v}),  ~i=0, \dots, l,$$ 
   such that their divisors 
  $\mathrm{div}(s_{i,v}), i=0, \dots, l$, are 
  in      $l$-subgeneral position on $X$.}
  Here we regard 
  $H^0(X, N(1+\delta)A - N(\epsilon_{i, v}+\delta')D_{i, v})$  as a subpsace of $H^0(X, N(1+\delta)A)$.
   We prove the claim by induction. 
   Assume that, for some $j\in \{0, \dots, l\}$, 
sections $s_{0, v}, \dots, s_{j-1, v}$ with the desired property have been found and  
$\mathrm{div}(s_{0, v}), \dots,  \mathrm{div}(s_{j-1, v}), D_{j,v}, \dots, D_{l,v}$ are in $l$-subgeneral position (for $j=0$, this reduces to the hypothesis that $D_{0, v}, \dots, D_{l, v}$ are 
in $l$-subgeneral position). 
In particular, it gives
\begin{equation}\label{eqn:dim1}
\dim\left((\cap_{i\in I} \mathrm{div}(s_{i, v}))\cap (\cap_{j\in J}D_{j, v})\right)
\leq l-\#I-\#J,
\end{equation}
and
\begin{equation}\label{eqn:dim2}
\dim \left(D_{j,v}\cap (\cap_{i\in I}\mathrm{div}(s_{i,v}))\cap (\cap_{j\in J}D_{j,v})\right) \leq l-\#I-\#J-1. 
\end{equation}
for any subset $I\subset \{0, \dots, j-1\}$ and $J\subset \{j+1, \dots, l\}$. 
 We choose $s_{j,v}\in H^0(X, N(1+\delta)A - N(\epsilon_{j, v}+\delta')D_{j, v})$
such that $s_{j,v}$ does not vanish entirely on any irreducible components of $(\cap_{i\in I}  \mathrm{div}(s_{i, v}))\cap (\cap_{j\in J}D_{j, v})$ where not both $I$ and $J$ are empty. Then 
by 
Proposition \ref{S}, 
\begin{equation}\label{eqn:dim3} 
\dim \left( \mathrm{div}(s_{j,v})\cap (\cap_{i\in I} (\mathrm{div}(s_{i,v}))\cap (\cap_{j\in J}D_{j,v})\right) \leq l-\#I-\#J-1.
\end{equation}
  This means that $\mathrm{div}(s_{0, v}), \dots,  \mathrm{div}(s_{j-1, v}), \mathrm{div}(s_{j,v}), D_{j+1,v},\dots, D_{l,v}$ are in $l$-subgeneral position.
This finishes the induction. The claim thus is proved.

Now for every $v\in S$, let  $ F_{i,v}:= \mathrm{div}(s_{i, v}), i=0, \dots, l$, be the divisors on $X$ constructed above. 
Then, from the construction,   $F_{i,v} - N(\epsilon_{i,v}+\delta')D_{i,v}$ is effective for each $i=0, \dots, l$. Hence, by Proposition \ref{weil_eff} and (\ref{a1}), 
\begin{equation}\label{a2}
\lambda_{F_{i,v}, v}(x)\geq N(\epsilon_{i,v}+\delta')\lambda_{D_{i,v}, v}(x) + O_v(1)
\ge N\epsilon_{D_{i, v}}(A)\lambda_{D_{i,v}, v}(x) + O_v(1)
\end{equation}
for $x$ outside $\Supp F_{i,v}$ and $\Supp D_{i,v}$.

 Denote by  $\phi: X\rightarrow  \PP^{\tilde N}(k)$ the canonical embedding 
 associated to the very ample divisor $N(1+\delta)A$ 
 and let   $H_{1, v}, \dots, H_{l+1, v}$ be the hyperplanes in $\PP^{\tilde N}(k)$ with 
  $F_{j, v}=\phi^*H_{j+1, v}$ for $j=0,\dots,l$. Following notation of Lemma \ref{quang}, we denote $L_{1,v},\dots,L_{l+1,v}$ the linear forms defining $H_{1,v},\dots, H_{l+1,v}$ respectively.
By Lemma \ref{quang}, there exists hyperplanes $\hat H_{1,v},\dots, \hat H_{n+1,v}$ with defining linear forms $\hat L_{t, v}, t=1, \dots, n+1$, such that $\hat L_{1,v} = L_{1,v}$, $\hat L_{t, v} \in \text{span}_k(L_{2,v},\dots, L_{l-n+t,v})$ for $t=2,\dots,n+1$
and  $\phi^*\hat H_{1, v},\dots,\phi^*\hat H_{n+1, v}$ are located  in general position on $X$.  
Applying Theorem A to  $\phi^*\hat H_{1, v},\dots,\phi^*\hat H_{n+1, v}$, 
we conclude that there exists a Zariski-closed set $Z$ such that for all $x\in X(k)\setminus Z$,
\begin{equation}\label{B}
\sum_{v\in S}\sum_{i=1}^{n+1} \lambda_{\phi^*\hat H_{i,v}, v}(x) \leq [(n+1)+\delta]h_{N(1+\delta)A}(x).\end{equation}
On the other hand, fixing  $P=\phi(x)\in \PP^{\tilde N}(k)$, we reorder $H_{1,v},\dots, H_{l+1,v}$ such that $\|L_{1,v}(P)\|_v \leq  \|L_{2,v}(P)\|_v\leq \cdots 
\le  \|L_{l+1, v}(P)\|_v$.
For $t=2,\dots,n+1$, using the fact that $\hat L_{t, v} \in \text{span}_k(L_{2, v},\dots, L_{l-n+t, v})$, we have
$$\|\hat L_{t, v}(P)\|_v\leq C_v \max_{2\leq j\leq l-n+t}\|L_{j, v}(P)\|_v = \|L_{l-n-t, v}(P)\|_v$$
for some constant $C_v>0$. Thus, by the definition of  $\lambda_{\hat H_t,v}(P)$, 
$$
\lambda_{\hat H_{t,v}, v}(P) \geq \lambda_{H_{l-n-t,v}, v}(P)+O_v(1).$$
Therefore
\begin{equation}\label{a}
\begin{split}
\sum_{j=1}^{l+1}\lambda_{H_{j,v}, v}(P) 
&= \sum_{j=1}^{l-n+1}\lambda_{H_{j,v}, v}(P) + \sum_{j=l-n+2}^{l+1}\lambda_{H_{j,v}, v}(P) \\
&\leq \sum_{j=1}^{l-n+1}\lambda_{H_{j,v}, v}(P) + \sum_{t=2}^{n+1}\lambda_{\hat H_{t,v}, v}(P) +O_v(1) \\
&\leq (l-n+1)\lambda_{H_{1,v}, v}(P)+ \sum_{t=2}^{n+1}\lambda_{\hat H_{t,v}, v}(P) +O_v(1)\\
&\leq (l-n+1) \sum_{i=1}^{n+1}\lambda_{\hat H_{i,v}, v}(P) + O_v(1).
\end{split}
\end{equation}
Noticing that  $P=\phi(x)$ and using the fact that $\lambda_{H_{j,v}, v}(\phi(x))= \lambda_{\phi^*H_{j,v}, v}(x)=\lambda_{F_{j-1, v}, v}(x)$, it gives,
by combining (\ref{B}) and (\ref{a}), we obtain
\begin{equation}\label{c}
  \begin{split}
    \sum_{v\in S} \sum_{i=0}^{l} \lambda_{F_{i, v}, v}(x)&= \sum_{v\in S}
    \sum_{j=1}^{l+1}\lambda_{H_{j,v}, v}(P) \\
    &\leq  (l-n+1)  \sum_{v\in S} \sum_{i=1}^{n+1}\lambda_{\hat H_{i,v}, v}(P) + O(1)\\
    & \leq (l-n+1)[(n+1)+\delta]h_{N(1+\delta)A}(x).
\end{split}
 \end{equation}
This, together with (\ref{a2}), gives
\[
\begin{split}
N\sum_{v\in S}\sum_{j=0}^l \epsilon_{D_{j, v}}(A)\lambda_{D_{j,v}, v}(x)
  &\leq \sum_{v\in S} \sum_{i=0}^{l} \lambda_{F_{i, v}, v}(x)\\
  &\leq (l-n+1)[(n+1)+\delta]h_{N(1+\delta)A}(x) +O(1)
\end{split}
\]
for $x\in X(k)\setminus Z$.
Note that  $h_{N(1+\delta)A}(x) =N(1+\delta)h_A(x)$, hence we can rewrite the above inequality as
$$\sum_{v\in S}\sum_{j=0}^l \epsilon_{D_{j,v}}(A)\lambda_{D_{j,v},v}(x)\leq (l-n+1)(n+1+\delta)(1+\delta)h_A(x).$$
Recall our choice of $\delta$, we 
get for all $x\in X(k)\setminus Z$,
$$
\sum_{v\in S}\sum_{j=0}^l \epsilon_{D_{j, v}}(A) \lambda_{D_{j,v},v}(x)\leq [(l-n+1)(n+1)+\epsilon]h_{A}(x).$$
The theorem is thus proved.

Note that Lemma \ref{quang} also holds for the field ${\mathbb C}$ case.  So the above argument  together, with Theorem B to replace Theorem A (note that the statement of Theorem B is slightly different from Theorem A, but we can obtain a statement which exactly corresponds to Theorem B, see Theorem 1.8 in \cite{HL17}), which derives the following analytic result. 

\begin{theorem} [Analytic  Part] Let
 $X$ be a complex  projective variety of dimension $n$.
 Let $D_1, \dots, D_q$ be effective Cartier divisors located in $l$-subgeneral position on $X$ with $l\ge n$. Let $A$ be an ample Cartier divisor on $X$. 
 Let $f: \mathbb C \rightarrow X$ be a
 holomorphic map with Zariski dense image. Then, for $\epsilon>0$, 
$$\sum_{j=1}^q \epsilon_{D_j}(A) m_f(r, D_j) \leq_{exc} [(l-n+1)(n+1)+\epsilon]T_{f,A}(r).$$
\end{theorem}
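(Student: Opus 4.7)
The plan is to mimic the proof of Theorem~\ref{thm:divisorcase} step by step in the Nevanlinna setting, substituting three analytic counterparts for the arithmetic ingredients: (i) Lemma~\ref{quang}, which is purely algebraic and therefore holds verbatim over $\mathbb{C}$; (ii) the inductive construction of sections whose divisors are in $l$-subgeneral position, which rests only on Proposition~\ref{S} and very-ampleness of $N[(1+\delta)A-(\epsilon_{j}+\delta')D_{j}]$; and (iii) the Heier--Levin Nevanlinna second main theorem with Seshadri constants (Theorem~1.8 of~\cite{HL17}), which plays the role of Theorem~A. This last result guarantees that for $n+1$ effective divisors $\tilde D_{1},\dots,\tilde D_{n+1}$ in general position on a complex projective variety $X$ of dimension $n$ and any Zariski-dense $f\colon\mathbb{C}\to X$, one has $\sum_{t}\epsilon_{\tilde D_{t}}(A)\,m_{f}(r,\tilde D_{t})\leq_{exc}(n+1+\epsilon)\,T_{f,A}(r)$.

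By an analytic analogue of the reduction used to derive the Corollary from the Main Theorem (via the analogue of Lemma~3.4 of~\cite{rw}), it suffices to treat $q=l+1$ divisors $D_{0},\dots,D_{l}$ in $l$-subgeneral position. Fix $\epsilon>0$, choose rationals $\delta,\delta'>0$ and $\epsilon_{j}\in\mathbb{Q}$ with $\epsilon_{D_{j}}(A)-\delta'\leq\epsilon_{j}\leq\epsilon_{D_{j}}(A)$ so that $(1+\delta)A-(\epsilon_{j}+\delta')D_{j}$ is $\mathbb{Q}$-ample and $\delta(l-n+1)+\delta(l-n+1)(n+1+\delta)<\epsilon$, and take $N$ so that $N(1+\delta)A$ and each $N[(1+\delta)A-(\epsilon_{j}+\delta')D_{j}]$ are very ample. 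The same induction as in the arithmetic proof yields sections $s_{j}\in H^{0}(X,N(1+\delta)A-N(\epsilon_{j}+\delta')D_{j})\subset H^{0}(X,N(1+\delta)A)$ whose divisors $F_{0},\dots,F_{l}$ are in $l$-subgeneral position on $X$. Embedding $\phi\colon X\hookrightarrow\mathbb{P}^{\tilde N}$ via $|N(1+\delta)A|$ identifies $F_{j}=\phi^{*}H_{j+1}$ for hyperplanes $H_{1},\dots,H_{l+1}\subset\mathbb{P}^{\tilde N}$ with defining linear forms $L_{1},\dots,L_{l+1}$.

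For each of the finitely many permutations $\sigma$ of $\{1,\dots,l+1\}$, Lemma~\ref{quang} applied to $L_{\sigma(1)},\dots,L_{\sigma(l+1)}$ produces linear forms $\hat L_{1}^{\sigma},\dots,\hat L_{n+1}^{\sigma}$ whose hyperplanes pull back to divisors in general position on $X$. Theorem~1.8 of~\cite{HL17} applied to each such family yields a set $E_{\sigma}\subset\mathbb{R}^{\geq 0}$ of finite Lebesgue measure with $\sum_{t}m_{f}(r,\phi^{*}\hat H_{t}^{\sigma})\leq(n+1+\delta)\,T_{f,N(1+\delta)A}(r)$ for $r\notin E_{\sigma}$; the union $E=\bigcup_{\sigma}E_{\sigma}$, still of finite measure, gives these bounds uniformly for $r\notin E$. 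For each $z\in\mathbb{C}$ let $\sigma(z)$ sort the values $\|L_{j}(\phi(f(z)))\|$ in non-decreasing order. The chain of inequalities producing (\ref{a}) carries over pointwise to give
$$\sum_{j=1}^{l+1}\lambda_{H_{j}}(\phi(f(z)))\leq(l-n+1)\sum_{t=1}^{n+1}\lambda_{\hat H_{t}^{\sigma(z)}}(\phi(f(z)))+O(1).$$

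Integrating this bound over $\theta\in[0,2\pi]$, partitioning $\partial B_{r}$ according to the value of $\sigma(z)$, and applying the Theorem~1.8 estimate on each piece (using the lower-boundedness of Weil functions of effective divisors from Proposition~\ref{weil_eff} to absorb contributions on complementary pieces) gives, for $r\notin E$,
$$\sum_{j=0}^{l}m_{f}(r,F_{j})\leq(l-n+1)(n+1+\delta)\,T_{f,N(1+\delta)A}(r)+O(1).$$
Combined with $\lambda_{F_{j}}\geq N\epsilon_{D_{j}}(A)\lambda_{D_{j}}+O(1)$ (Proposition~\ref{weil_eff} applied to the effective divisor $F_{j}-N(\epsilon_{j}+\delta')D_{j}$, using $\epsilon_{j}+\delta'\geq\epsilon_{D_{j}}(A)$), dividing by $N$, and using $T_{f,N(1+\delta)A}=N(1+\delta)\,T_{f,A}$ together with the choice of $\delta$, this gives the stated inequality. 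The main technical point is the integration step: one must confirm that the finitely many per-permutation contributions assemble to the single clean bound $(l-n+1)(n+1+\delta)\,T_{f,N(1+\delta)A}$ rather than inflating it by the factor $|\mathfrak{S}_{l+1}|$. This is the analytic counterpart of how Theorem~A absorbs the sum over $v\in S$ in the arithmetic argument without incurring a factor of $|S|$, and it is precisely where the strength of Theorem~1.8 of~\cite{HL17} is crucial.
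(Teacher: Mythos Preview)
Your overall strategy is exactly the paper's: the paper's own ``proof'' of this theorem is the single sentence that Lemma~\ref{quang} holds over~$\mathbb{C}$ and that one replaces Theorem~A by its Nevanlinna analogue (Theorem~B, or Theorem~1.8 of~\cite{HL17}) in the argument for Theorem~\ref{thm:divisorcase}. Your choice of~$\delta,\delta',\epsilon_j,N$, the inductive construction of the~$F_j$, the embedding by~$|N(1+\delta)A|$, and the pointwise estimate leading to the analogue of~(\ref{a}) are all correct and match the paper.

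There is, however, a genuine gap at the step you yourself flag. Partitioning $\partial B_r$ by the value of $\sigma(z)$ and bounding each piece by the full proximity integral $\sum_t m_f(r,\phi^*\hat H_t^{\sigma})$ via Proposition~\ref{weil_eff}, as you propose, does \emph{not} avoid the factor $(l+1)!$: one obtains
\[
\sum_{\sigma}\int_{S_\sigma(r)}\sum_{t}\lambda_{\hat H_t^\sigma}\,\frac{d\theta}{2\pi}
\;\le\;\sum_{\sigma}\Bigl(\sum_t m_f(r,\phi^*\hat H_t^{\sigma})+O(1)\Bigr)
\;\le\;(l+1)!\,(n+1+\delta)\,T_{f,N(1+\delta)A}(r),
\]
which is useless. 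Your analogy with Theorem~A is the right intuition but the wrong mechanism: in the arithmetic proof the permutation is absorbed because Theorem~A allows the divisors to depend on~$v$ (so one applies it once for each of the finitely many maps $S\to\mathfrak{S}_{l+1}$ and unions the exceptional sets), and this has no direct counterpart when the ``place'' is $\theta\in[0,2\pi]$.

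The correct analytic substitute is the $\int\max$ form of the Second Main Theorem, namely
\[
\int_0^{2\pi}\max_{I}\sum_{t\in I}\lambda_{\phi^*H_t}\bigl(f(re^{i\theta})\bigr)\,\frac{d\theta}{2\pi}\;\le_{exc}\;(n+1+\delta)\,T_{f,N(1+\delta)A}(r),
\]
where the maximum runs over the finitely many $(n{+}1)$-element subsets~$I$ of the total collection $\{\phi^*\hat H_t^{\sigma}\}_{\sigma,t}$ whose members lie in general position on~$X$. This is the genuine Nevanlinna analogue of ``$v$-dependent divisors'' and is what underlies Theorem~B (and Theorem~1.8 of~\cite{HL17}); it is standard in the Evertse--Ferretti/Ru filtration method and in Quang's own analytic arguments. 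Once you invoke this form, the pointwise inequality $\sum_j\lambda_{H_j}\le(l-n+1)\max_\sigma\sum_t\lambda_{\hat H_t^\sigma}+O(1)$ integrates directly to the desired bound with no combinatorial inflation. Replace your partitioning paragraph with an appeal to this $\int\max$ inequality and the proof is complete.
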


\section{The  subscheme case}
We follow the notation in \cite{rw} or \cite{HL17}.
Let $Y$ be a closed subscheme on a projective variety $V$ defined over $k$.
Then one can associate to each place $v\in M_k$ a function
$$
\lambda_{Y,v}: V\setminus \Supp(Y)\to \mathbb R
$$
satisfying some functorial properties (up to a constant) described 
in \cite[Theorem 2.1]{Sil}.
Intuitively, for each $P\in V, v\in M_k$
$$
\lambda_{Y,v}( P)=-\log(v\text{-adic distance from $P$ to $Y$}).
$$
The starting point to study such arithmetic distance functions on subschemes is to understand the natural operations (i.e. addition, intersection, image and inverse image under morphism) on subschemes, which are parallel with the case of divisors. The key observation here is that we can identify a closed subscheme $Y$ of $V$ with its ideal sheaf $\SI_Y$, and such operations are naturally defined in terms of operations on ideal sheaves. We briefly summarize them as follows, details can be found in Section 2, \cite{Sil}. Let $Y,Z$ be closed subschemes of $V$.

(i) The sum of $Y$ and $Z$, denoted by $Y+Z$ is the subscheme of $V$ with ideal sheaf $\SI_Y \SI_Z$.

(ii) The intersection of $Y$ and $Z$, denoted by $Y\cap Z$, is the subscheme of $V$ with ideal sheaf $\SI_{Y}+\SI_{Z}$.

(iii) The union of $Y$ and $Z$, denoted by $Y\cup Z$, is the subscheme of $V$ with ideal sheaf $\SI_Y \cap \SI_Z$.

(iv) Let $\phi:W\to V$ be a morphism of varieties. The inverse image of $Y$ is the subscheme of $W$ with ideal sheaf $\phi^{-1}\SI_Y \cdot \SO_W$, denoted by $\phi^* Y$.

The following lemma indicates the existence of local Weil functions for closed subschemes: 
\begin{lemma} [Lemma 2.2 in \cite{Sil}]\label{representation}
Let $Y$ be a closed subscheme of $V$.  There exist effective Cartier divisors $D_1,\cdots,D_r$ such that 
$$
Y=\cap_{i=1}^r D_i.
$$
\end{lemma}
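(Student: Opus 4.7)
The plan is to realize each $D_i$ as the zero scheme of a generating global section of a sufficiently ample twist of the ideal sheaf $\SI_Y$, and then read off the desired intersection identity from the generation condition.

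First I would fix a very ample invertible sheaf $\SO_V(1)$ on the projective variety $V$ and apply Serre's theorem: for $n$ sufficiently large, the coherent sheaf $\SI_Y(n) := \SI_Y \otimes \SO_V(n)$ is generated by its global sections. I would then choose finitely many $s_1,\dots,s_r \in H^0(V,\SI_Y(n))$ that generate $\SI_Y(n)$ as an $\SO_V$-module; such an $r$ exists because $V$ is noetherian and $H^0(V,\SI_Y(n))$ is a finite-dimensional $k$-vector space.

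Next, I would view each $s_i$ as a nonzero global section of $\SO_V(n)$ that vanishes on $Y$. Since $V$ is an integral projective variety, a nonzero section of a line bundle is not a zero divisor locally, so its zero scheme $D_i := (s_i)_0$ is an effective Cartier divisor on $V$, with ideal sheaf $\SI_{D_i} = s_i \cdot \SO_V(-n) \hookrightarrow \SO_V$, and by construction $Y \subset D_i$ scheme-theoretically.

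Finally I would verify $Y = \bigcap_{i=1}^r D_i$ as closed subschemes. The generation statement reads $\sum_{i=1}^r s_i \cdot \SO_V = \SI_Y(n)$ as subsheaves of $\SO_V(n)$; tensoring with $\SO_V(-n)$ gives $\sum_{i=1}^r \SI_{D_i} = \SI_Y$, and by item (ii) of the list of operations on closed subschemes above, the left-hand side is precisely the ideal sheaf of $D_1 \cap \cdots \cap D_r$. The only nontrivial input is Serre's global generation theorem, so no real obstacle arises; the subtlety worth flagging is the use of integrality of $V$ to ensure each chosen section actually cuts out an effective Cartier divisor (rather than a locally non-principal subscheme). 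If one wished to drop integrality, one would instead have to choose sections avoiding the associated primes of $\SO_V$, which is possible by a prime-avoidance argument applied to $H^0(V,\SI_Y(n))$.
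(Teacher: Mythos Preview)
The paper does not supply its own proof of this lemma; it simply quotes the statement from Silverman \cite{Sil} and moves on. Your argument is correct and is in fact the standard one (and essentially the one Silverman gives): twist $\SI_Y$ until it is globally generated, take finitely many generating sections, and observe that the sum of the resulting principal ideal sheaves recovers $\SI_Y$. The only point to watch, which you already flag, is that integrality of $V$ is what guarantees each nonzero $s_i\in H^0(V,\SO_V(n))$ cuts out an honest effective Cartier divisor; in the paper ``projective variety'' includes integrality, so this is not an issue.
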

\begin{definition} \label{weil}Let $k$ be a number field, and $M_k$ be the set of places on $k$.  Let $V$ be a projective variety over $k$ and let $Y\subset V$ be a closed subscheme of $V$.
We define the (local) Weil function for $Y$ with respect to $v\in M_k$ as 
\begin{align}\label{WeilY}
\lambda_{Y, v}=\min_{1\leq i\leq r} \{\lambda_{D_i, v}\},
\end{align}
when $Y=\cap_{i=1}^r D_i$ (such $D_i$ exist according to the above lemma), where $\lambda_{D_i,v}$ is the Weil function for Cartier divisors appeared in previous section. Hence in particular if the closed subscheme is a Cartier divisor then the definition coincides with the one given in discussion of divisor case.
\end{definition}

 The behavior of Weil functions for closed subschemes have some similarity with the case of divisors in the sense as follows:
\begin{theorem}[Theorem 2.1 in \cite{Sil}] 
Let $Y,Z$ be closed subschemes of $V$. Then up to $O_v(1)$:

(i) $\lambda_{Y\cap Z, v} = \min \{\lambda_{Y,v},\lambda_{Z,v}\}.$

(ii) $\lambda_{Y+Z,v} = \lambda_{Y,v} +\lambda_{Z,v}.$

(iii) If $Y\subset Z$ then $\lambda_{Y,v}\leq \lambda_{Z,v}$.

Furthermore, let $\phi:W\to V$ be morphism of varieties, one has

(iv) $\lambda_{\phi^*Y,v}(P)=\lambda_{Y,v}(\phi(P))$ for $P\in W\setminus \supp\; \phi^*Y$. 

\begin{definition}
Let $X$ be a projective variety. $Y$ be a closed subscheme of $X$ corresponding to a coherent sheaf of ideals $\SI$. $\mathcal S$ be the sheaf of graded algebras $\bigoplus_{d\geq 0}\SI^d$ with convention $\SI^0 = \SO_X$ and $\SI^d$ be the $d$-th power of $\SI$. Then the scheme $\Proj \; \mathcal S$ is called the blowing-up of $X$ with respect to $\SI$ or blowing-up of $X$ along $Y$.
\end{definition}

\end{theorem}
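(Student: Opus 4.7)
The approach is to reduce each of the four assertions to properties of Weil functions for Cartier divisors by invoking Lemma \ref{representation} to write $Y=D_1\cap\cdots\cap D_r$ and $Z=E_1\cap\cdots\cap E_s$ as scheme-theoretic intersections of effective Cartier divisors, and then to apply Definition \ref{weil} together with the additivity and functoriality of Weil functions for divisors developed in the previous section.

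The first task is to verify that the definition $\lambda_{Y,v}=\min_i\lambda_{D_i,v}$ does not depend, up to an $M_k$-constant, on the representation of $Y$ chosen. If $Y=\cap D_i=\cap E_j$, then the two ideal sheaves coincide, so on each affine chart the local defining functions $f_i$ of $D_i$ and $g_j$ of $E_j$ generate the same ideal. Writing each $f_i$ as a regular combination of the $g_j$ and vice versa gives inequalities of $v$-adic norms in both directions that are uniform on a fixed affine cover, hence $\min_i\lambda_{D_i,v}$ and $\min_j\lambda_{E_j,v}$ agree up to a bounded function. This is essentially the content used implicitly in the divisor case and must be laid out here before (i)--(iv) can be tackled.

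For (i) and (iii) the argument is direct. The intersection $Y\cap Z$ corresponds to the ideal sheaf $\mathcal I_Y+\mathcal I_Z$ and is therefore cut out scheme-theoretically by the combined family $\{D_i\}\cup\{E_j\}$, which gives
\[
\lambda_{Y\cap Z,v}=\min\bigl\{\min_i\lambda_{D_i,v},\min_j\lambda_{E_j,v}\bigr\}=\min\{\lambda_{Y,v},\lambda_{Z,v}\}.
\]
For (iii), the inclusion $Y\subset Z$ yields $\mathcal I_Z\subset\mathcal I_Y$; the family $\{D_i\}\cup\{E_j\}$ is then itself a representation of $Y$, and so
\[
\lambda_{Y,v}\le\min_j\lambda_{E_j,v}=\lambda_{Z,v}
\]
modulo an $M_k$-constant. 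Property (iv) follows from the commutation of pullback with intersection of ideal sheaves, $\phi^*Y=\cap_i\phi^*D_i$, together with the functoriality of Weil functions for Cartier divisors stated in the previous section, which gives $\lambda_{\phi^*D_i,v}(P)=\lambda_{D_i,v}(\phi(P))$; taking the minimum over $i$ yields the claim.

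The main obstacle is (ii). Here $Y+Z$ has ideal sheaf $\mathcal I_Y\cdot\mathcal I_Z$, locally generated by the products $\{f_ig_j\}_{i,j}$, and since $\mathrm{div}(f_ig_j)=D_i+E_j$ one obtains the representation $Y+Z=\cap_{i,j}(D_i+E_j)$. Using additivity $\lambda_{D_i+E_j,v}=\lambda_{D_i,v}+\lambda_{E_j,v}$ of Weil functions on Cartier divisors,
\[
\lambda_{Y+Z,v}=\min_{i,j}\lambda_{D_i+E_j,v}=\min_{i,j}\bigl(\lambda_{D_i,v}+\lambda_{E_j,v}\bigr),
\]
and the elementary identity $\min_{i,j}(a_i+b_j)=\min_i a_i+\min_j b_j$ concludes the argument. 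The principal technical point throughout, as in the well-definedness step, is that all permutations of minima, sums, and changes of representations must be performed with $M_k$-constants uniform in $P$, so that the resulting identities hold up to a single $O_v(1)$ and not merely pointwise.
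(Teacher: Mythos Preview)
The paper does not supply its own proof of this statement: it is quoted verbatim as Theorem~2.1 of \cite{Sil} and used as a black box, so there is no argument in the paper to compare against. Your proposal is the standard proof (and essentially the one Silverman gives): reduce to divisors via Lemma~\ref{representation}, check well-definedness of $\min_i\lambda_{D_i,v}$, and then verify (i)--(iv) by manipulating minima and using additivity/functoriality of Weil functions for Cartier divisors. The arguments you give for each part are correct; in particular the key observation for (ii), that $Y+Z=\bigcap_{i,j}(D_i+E_j)$ together with $\min_{i,j}(a_i+b_j)=\min_i a_i+\min_j b_j$, is exactly the right one.
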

 The functoriality of Weil function stated in (iv) of above theorem, is of particular importance when we deal with blowing-ups, hence we reformulate it as a lemma.
\begin{lemma}[Lemma 2.5.2 in \cite{Vojta_LNM}; Theorem 2.1(h) in \cite{Sil}] \label{new} Let $Y$ be a closed subscheme of $V$, and let ${\tilde V}$ be the blowing-up of $V$ along $Y$ with exceptional divisor $E$. Then 
 $\lambda_{Y,v}(\pi(P))=\lambda_{E, v}(P)+O_v(1)$ for $P\in {\tilde V}\setminus \Supp E$.
 \end{lemma}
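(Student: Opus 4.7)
The plan is to identify the pullback subscheme $\pi^{*}Y$ with the exceptional divisor $E$ as closed subschemes of $\tilde V$, and then to invoke the functoriality of Weil functions under morphisms. First, recall from operation (iv) at the beginning of this section that $\pi^{*}Y$ is the closed subscheme of $\tilde V$ cut out by the ideal sheaf $\pi^{-1}\SI_{Y}\cdot \SO_{\tilde V}$. But the defining property of the blow-up of $V$ along $\SI_{Y}$ is precisely that the inverse image ideal sheaf $\pi^{-1}\SI_{Y}\cdot \SO_{\tilde V}$ is an invertible sheaf on $\tilde V$, and equals $\SO_{\tilde V}(-E)$, where $E$ is the exceptional divisor (this is the standard universal property of blowing up, e.g.\ Hartshorne, Chapter~II). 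Thus as closed subschemes of $\tilde V$ we have the equality $\pi^{*}Y = E$.

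Second, apply the functoriality property (iv) of the preceding Theorem~2.1 in \cite{Sil} to the morphism $\pi\colon \tilde V\to V$ and the subscheme $Y\subset V$; this gives
$$\lambda_{\pi^{*}Y, v}(P) = \lambda_{Y, v}(\pi(P)) + O_v(1)$$
for all $P\in \tilde V\setminus \Supp \pi^{*}Y$. Substituting the identification $\pi^{*}Y = E$ obtained in the first step yields $\lambda_{Y,v}(\pi(P)) = \lambda_{E,v}(P) + O_v(1)$, which is the claim. The entire content of the lemma sits in the scheme-theoretic identification $\pi^{*}Y = E$; once that is granted from the construction of the blow-up, the remainder is a one-line application of a functoriality property already stated in this section, so no serious obstacle arises.
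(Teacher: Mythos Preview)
Your argument is correct. The paper does not actually give a proof of this lemma; it is simply quoted from \cite{Vojta_LNM} and \cite{Sil}. Your derivation---identifying $\pi^{*}Y$ with $E$ via the defining property $\pi^{-1}\SI_{Y}\cdot\SO_{\tilde V}=\SO_{\tilde V}(-E)$ of the blow-up, and then applying the functoriality property~(iv) already recorded in the preceding theorem---is exactly the standard proof one finds in the cited references, so there is nothing to compare beyond noting that you have supplied what the paper omits.
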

 The strategy to deal with closed subschemes is to look at blowing-ups and relate to the case of divisors using functoriality of Weil functions and Height functions. However, to apply the method in divisor case we need to obtain linear equivalent ample Cartier divisors in $l$-subgeneral position, for which it's convenient to go back downstairs to avoid dealing with the influence of exceptional divisors. The following lemma allows us to find Cartier divisors downstairs with desired intersection property.

\begin{lemma}[See Lemma 5.4.24 in \cite{L}]\label{L}
  Let $X$ be projective variety,  $\SI$ be a coherent ideal sheaf.  Let $\pi:\tilde X\to X$ be the blowing-up of $\SI$ with exceptional divisor $E$.
  Then there exists an integer $p_0=p_0(\SI)$ with the property that if $p\ge p_0$,  then $\pi_*\SO_{\tilde X}(-pE) =\SI^p$, and moreover, for any divisor $D$ on $X$, 
  $$H^i(X, \SI^p(D))=H^i( \tilde X, \SO_{\tilde X}(\pi^*D-pE))$$ for all $i\ge 0$. 
 \end{lemma}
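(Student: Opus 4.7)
The plan is to reduce the statement to two relative geometric facts about $\pi$: a computation of $\pi_*\SO_{\tilde X}(-pE)$ in large degrees, and relative Serre vanishing for higher direct images of the same sheaves. By the standard construction of the blow-up, $\tilde X = \Proj_X\bigl(\bigoplus_{p\ge 0}\SI^p\bigr)$ and the tautological line bundle $\SO_{\tilde X}(1)$ coincides with $\SO_{\tilde X}(-E)$; in particular, $\SO_{\tilde X}(-E)$ is $\pi$-ample. Once both facts are in hand, the Leray spectral sequence combined with the projection formula assembles them into the cohomological identity for arbitrary $D$.

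First I would establish the pushforward identity $\pi_*\SO_{\tilde X}(-pE) = \SI^p$ for $p \gg 0$. There is a canonical morphism $\SI^p \to \pi_*\SO_{\tilde X}(-pE)$ coming from the Proj construction, and this morphism is already an isomorphism on the open set where $\pi$ is an isomorphism. To upgrade this to an isomorphism everywhere for large $p$, I would work locally on $X$: on an affine chart $\Spec R$ with $\SI$ corresponding to an ideal $I\subset R$, the graded module $\bigoplus_p \Gamma(\tilde X, \SO_{\tilde X}(-pE))|_{\Spec R}$ differs from the Rees algebra $\bigoplus_p I^p$ only by a graded module supported in bounded degree, and the discrepancy disappears above a uniform index $p_0 = p_0(\SI)$. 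Coherence of $\SI$ and Noetherianness are used here.

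Next I would prove the cohomology identity. Since $\SO_{\tilde X}(-E)$ is $\pi$-ample and $\pi$ is projective, relative Serre vanishing gives $R^i\pi_*\SO_{\tilde X}(-pE)=0$ for every $i\ge 1$ once $p$ is sufficiently large; after possibly enlarging $p_0$, we may assume this holds simultaneously with the pushforward identity. For any divisor $D$ on $X$, the projection formula yields
$$R^i\pi_*\SO_{\tilde X}(\pi^*D-pE)\;\cong\; R^i\pi_*\SO_{\tilde X}(-pE)\otimes \SO_X(D),$$
so all higher direct images of $\SO_{\tilde X}(\pi^*D-pE)$ vanish, and the degree-zero pushforward equals $\SI^p\otimes\SO_X(D) = \SI^p(D)$. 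The Leray spectral sequence $E_2^{i,j}=H^i(X,R^j\pi_*\mathcal F)\Rightarrow H^{i+j}(\tilde X,\mathcal F)$ then collapses to
$$H^i(\tilde X,\SO_{\tilde X}(\pi^*D-pE))\;\cong\; H^i(X,\SI^p(D))$$
for all $i\ge 0$, completing the argument.

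The main obstacle is pinning down the uniform bound $p_0$ for the pushforward identity, rather than the Leray/vanishing step, which is fairly mechanical. The identity $\pi_*\SO_{\tilde X}(-pE)=\SI^p$ can genuinely fail for small $p$: for example, when $\SI$ is not integrally closed, the integral closure of $\SI$ appears in $\pi_*\SO_{\tilde X}(-E)$ but not in $\SI$ itself. One must therefore argue carefully that the obstruction is a graded module of finite total length supported in bounded degree, so that a single $p_0$ works both for the pushforward identity and, after enlargement, for the simultaneous relative vanishing.
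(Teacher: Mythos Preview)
The paper does not supply its own proof of this lemma; it is quoted verbatim from Lazarsfeld's \emph{Positivity in Algebraic Geometry I}, Lemma 5.4.24, and used as a black box in the proof of the Main Theorem. Your argument is correct and is essentially the one Lazarsfeld gives: identify $\SO_{\tilde X}(-E)$ with the $\pi$-ample tautological bundle $\SO_{\tilde X}(1)$ on the $\Proj$ of the Rees algebra, use the standard comparison $\SI^p\cong\pi_*\SO_{\tilde X}(p)$ for $p\gg 0$, apply relative Serre vanishing to kill $R^i\pi_*$ for $i\ge 1$, and then collapse the Leray spectral sequence via the projection formula.

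One small correction: in your last paragraph, the obstruction to $\SI^p\to\pi_*\SO_{\tilde X}(-pE)$ being an isomorphism need not be a module of \emph{finite length} (think of a non--integrally closed ideal on a high-dimensional variety; the integral closure can differ from $\SI$ along a positive-dimensional locus). What is true, and sufficient, is that the kernel and cokernel of the graded map $\bigoplus_p\SI^p\to\bigoplus_p\pi_*\SO_{\tilde X}(p)$ are coherent $\SO_X$-modules concentrated in finitely many graded degrees; this is the general statement for $\Proj$ of a finitely generated graded $\SO_X$-algebra (EGA~II, 3.3 or Hartshorne, Exercise~II.5.9). So ``supported in bounded degree'' is the operative phrase, and ``finite total length'' should be dropped.
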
 
  
 We now prove the theorem, the proof basically follows Heier-Levin's proof (Page 7, \cite{HL17}).

\noindent{\it Proof of The Main Theorem}.
Denote by  $\SI_{i,v}$ the ideal sheaf of $Y_{i,v}$,   $\pi_{i,v}:{\tilde X_{i,v}}\to X$ the blowing-up of $X$ along $Y_{i,v}$, and 
$ E_{i,v}$ the exceptional divisor  on $\tilde X_{i,v}$.
Fix real number $\epsilon > 0$.
 Choose rational number $\delta > 0$ such that 
 $$\delta(l-n+1)+\delta(l-n+1)(n+1+\delta)<\epsilon.$$ 

Then for small enough positive rational number $\delta'$ depending on $\delta$, 
$\delta\pi^* A - \delta' E_{i,v}$ is $\QQ$-ample on $\tilde{X}_{i,v}$ for all $i\in \{0,\dots,l\}$ and $v\in S$. 
 By the definition of Seshadri constant, there exists a rational number $\epsilon_{i, v}>0$ such that 
\begin{equation}\label{b1}
  \epsilon_{i, v}
  +\delta' \ge \epsilon_{Y_{i, v}}(A)
  \end{equation}
  and   $\pi_{i,v}^* A - \epsilon_{i,v}E_{i,v}$ is $\QQ$-nef on $\tilde X_{i,v}$ for all $0\leq i\leq l,v\in S$.   
     With such choices, we have   $(1+\delta)\pi^*_{i,v}A - (\epsilon_{i,v}+\delta')E_{i,v}$
is  ample $\QQ$-divisor on $\tilde X_{i,v}$ for all $i,v$. Let $N\gg 0$ be an integer large enough s.t. $N(1+\delta)\pi^*_{i,v}A$ and 
$N[(1+\delta)\pi^*_{i,v}A - (\epsilon_{i,v}+\delta')E_{i,v}]$ 
are very ample integral divisors on $\tilde X_{i,v}$  for all $0\leq i \leq l$ and $v\in S$. 

 Fix $v\in S$, like the special divisor case in the above section, we claim that we can 
     construct Cartier  divisors $F_{0,v},\dots, F_{l,v}$ on $X$ which are located in  $l$-subgeneral position,       such that 
$F_{i,v}\sim N(1+\delta)A$ and $\pi_{i,v}^* F_{i,v}\geq N(\epsilon_{i,v}+\delta')E_{i,v}$ on $\tilde{X}_{i,v}$ for all $0\leq i \leq l$.
This can be done inductively.  Assume $F_{0,v},\dots, F_{j-1,v}$ have been constructed so that 
$F_{i,v}\sim N(1+\delta)A$ and $\pi_{i,v}^* F_{i,v}\geq N(\epsilon_{i,v}+\delta')E_{i,v}$ on $\tilde{X}_{i,v}$ for all $0\leq i \leq j-1$,
  and that 
$F_{0,v}, \dots,  F_{j-1,v}, Y_{j, v}, \dots, Y_{l, v}$ are in $l$-subgeneral position on $X$ (for $j=0$, this reduces to the hypothesis that $Y_{0, v}, \dots, Y_{l, v}$ are 
in $l$-subgeneral position). 
 To find $F_{j,v}$, we let  $\tilde F_{i,v}^{(j)} =\pi_{j, v}^*F_{i, v}$, $i=0, \dots, j-1$, and $\tilde{Y}_{i, v}^{(j)}= \pi^*_{j, v} Y_{i, v}$ for $i=j+1, \dots, l$. 
  Since,  in particular, $F_{0,v},\dots, F_{j-1,v}, Y_{j+1,v},\dots, Y_{l,v}$ are in $l$-subgeneral position on $X$,  and
  by noticing that  $\pi_{j,v}^{-1}$ is isomorphism outside of $Y_{j,v}$,   we know that 
 $\tilde F_{0,v}^{(j)}, \dots, \tilde F_{j-1,v}^{(j)},  \tilde Y_{j+1,v}^{(j)}, \dots,  \tilde Y_{l,v}^{(j)}$
 are in $l$-subgeneral position on  $\tilde X_{j,v}$ outside of $E_{j,v}$.
 It is thus reduced to the construction in the divisors case, and by the argument in the divisors case, there are sections
 $$\tilde s_{j,v}\in H^0(\tilde{X}_{j,v},\SO_{\tilde X_{j,v}}(N((1+\delta)\pi^*_{j,v}A-(\epsilon_{j,v}+\delta')E_{j,v}))),$$
 such that $\tilde F_{0,v}^{(j)}, \dots, \tilde F_{j-1,v}^{(j)}, \mathrm{div}(\tilde s_{j,v}), \tilde  Y_{j+1,v}^{(j)}, \dots,  \tilde Y_{l,v}^{(j)}$
 are in $l$-subgeneral position on  $\tilde X_{j,v}$ outside of $E_{j,v}$, where we regard $H^0(\tilde{X}_{j,v},\SO_{\tilde X_{j,v}}(N((1+\delta)\pi^*_{j,v}A-(\epsilon_{j,v}+\delta')E_{j,v})))$
 as a subspace of $H^0(\tilde{X}_{j,v},\SO_{\tilde X_{j,v}}(N((1+\delta)\pi^*_{j,v}A))).$  
  On the other hand,  by Lemma \ref{L}, we have, for $N$ big enough, 
 $$
 H^0(X,\SO_X(N(1+\delta)A)\otimes \SI_{j,v}^{N(\epsilon_{j,v}+\delta')}) =
 H^0(\tilde{X}_{j,v},\SO_{\tilde X_{j,v}}(N((1+\delta)\pi^*_{j,v}A- (\epsilon_{j,v}+\delta')E_{j,v}))).$$
 Therefore there is an effective divisor $F_{j, v} \sim  N(1+\delta)A$ on $X$ such that  $ \mathrm{div}(\tilde s_{j,v})=\pi_{j, v}^*F_{j, v}$. 
 Since $\tilde s_{j,v}\in H^0(\tilde{X}_{j,v},\SO_{\tilde X_{j,v}}(N((1+\delta)\pi^*_{j,v}A-(\epsilon_{j,v}+\delta')E_{j,v}))),$
 we have $\pi_{j,v}^* F_{j,v}\geq N(\epsilon_{j,v}+\delta')E_{j,v}$ on $\tilde{X}_{j,v}$.
  To complete the induction, it remains to
show that $F_{0,v},\dots, F_{j-1,v}, F_{j, v}, Y_{j+1,v},\dots, Y_{l,v}$ are in $l$-subgeneral position on $X$. Since  
$\tilde F_{0,v}^{(j)}, \dots, \tilde F_{j-1,v}^{(j)},  \mathrm{div}(\tilde s_{j,v}), \tilde  Y_{j+1,v}^{(j)}, \dots,  \tilde Y_{l,v}^{(j)}$
 are in $l$-subgeneral position on  $\tilde X_{j,v}$ outside of $E_{j,v}$, 
and $\pi_{j, v}$  is an isomorphism above the complement of $Y_{j, v}$,  it is clear that
$F_{0,v},\dots,  F_{j-1, v}, F_{j, v}, Y_{j+1,v},\dots, Y_{l,v}$ are in $l$-subgeneral position on $X$ outside $Y_{j, v}$. 
The full statement now follows from combining this with the fact that $Y_{j, v}$  is in $l$-subgeneral position
with $F_{0,v},\dots, F_{j-1,v},  Y_{j+1,v},\dots, Y_{l,v}$. 
This meets the requirement. So the claim holds by induction.

Following the claim, for every $v\in S$, we get linearly equivalent Cartier divisors $F_{i,v}\sim N(1+\delta)A$, $ i=0,\dots, l$ in $l$-subgeneral position on $X$.
By the same way in deriving (\ref{c}), we get \begin{equation}\label{m}
\sum_{v\in S}\sum_{i=0}^l \lambda_{F_{i,v}, v}(x)\leq (l-n+1)[(n+1)+\delta]h_{N(1+\delta)A}(x)
\end{equation}
on $X(k)\setminus Z$ where $Z$ is a proper Zariski-closed subset of $X$.
To relate with $Y_{j,v}$, by the fact that $\pi^*_{j,v}F_{j,v}\geq N(\epsilon_{j,v}+\delta')E_{j,v}$ on $\tilde X_{i,v}$,  by  applying Proposition \ref{weil_eff},  Lemma \ref{new}, and (\ref{b1}), for all $P\in \tilde X_{j,v}\setminus \Supp E_{j,v}$,
\[
\begin{split}
    N\epsilon_{Y_{j, v}}(A) \lambda_{Y_{j,v}, v}(\pi_{j,v}(P))
        &\leq N(\epsilon_{j,v}+\delta')\lambda_{Y_{j,v}, v}(\pi_{j,v}(P))\\ 
        &=N(\epsilon_{j,v}+\delta')\lambda_{E_{j,v},v}(P) \\
        &\leq \lambda_{\pi_{j,v}^*F_{j,v},v}(P)=\lambda_{F_{j,v},v}(\pi_{j,v}(P)).
\end{split}
\]
This, together with (\ref{m}), gives
\begin{equation}
\begin{split}
N\sum_{v\in S} \sum_{j=0}^l \epsilon_{Y_{j, v}}(A) \lambda_{Y_{j,v}, v}(x)
 &\leq \sum_{v\in S}\sum_{j=0}^l \lambda_{F_{j,v}}(x) \\
                  &\leq(l-n+1)[(n+1)+\delta]h_{N(1+\delta)A}(x)\\                        
                  &=(l-n+1)[(n+1)+\delta]N(1+\delta)h_{A}(x)
\end{split}
\end{equation}
on $X(k)\setminus Z$.
By the choice of $\delta$, the result follows.

In the complex case, we use the notations in \cite{rw}. In particular, for a complex projective variety  $X$ and 
 a  holomorphic map $f: {\mathbb C}\rightarrow X$ with $f({\mathbb C})\not\subset Y$, we define
the proximity function 
$$m_f(r,Y) = \int_0^{2\pi} \lambda_Y (f(re^{i\theta})) \frac{d \theta}{2\pi}.$$
The above argument also works, by replacing Theorem A with Theorem B, we can prove the analytic result of the Main Theorem stated in the introduction section.


\end{document}